\theoremstyle{plain}
\newtheorem{corollary}{\bf Corollary}
\newtheorem{definition}{\bf Definition}
\newtheorem{example}{\bf Example}
\newtheorem{lemma}{\bf Lemma}
\newtheorem{proposition}{\bf Proposition}
\newtheorem{theorem}{\bf Theorem}
\numberwithin{equation}{section}
\newcommand{\real}{\mathbb{R}}
\newcommand{\Vol}{{\rm{Vol}}}
\newcommand{\di}{{\rm{ div}}}
\newcommand{\Ric}{{\rm{Ric}}}
\newcommand{\tr}{{\rm tr}\,}
\newcommand{\ric}{{\rm Ric}}
\newcommand{\R}{\mathcal{R}_f}
\title[Perelman singular manifolds ]{Perelman singular manifolds}
\author[M. Batista, A. Freitas, M. Santos]{M. Batista$^{1, \ast}$,  A. Freitas$^2$, and M.  Santos$^{2}$}
\address{$^1$CPMAT - IM\\
	Universidade Fe\-deral de Alagoas\\
	57072-900 Macei\'o, Alagoas, Brazil}
	\email{mhbs@mat.ufal.br}
\address{$^2$Departamento de Matem\'{a}tica\\
	Universidade Federal da Para\'{\i}ba\\
	58.051-900 Jo\~{a}o Pessoa, Para\'{\i}ba, Brazil}
\email{allan@mat.ufpb.br}
\email{marcio.academico@ufpb.br}
\keywords{Singular spaces; Perelman scalar curvature; Variational problems} 
\subjclass[2020]{Primary 53C21; Secondary 53C15; 58JXX}
\thanks{$^\ast$Corresponding author.}
\begin{document} 

\begin{abstract}
On a Riemannian manifold with a smooth function $f: M\to \mathbb{R}$, we consider the linearization of the Perelman scalar curvature $\R$ and its $L^2$-formal adjoint operator $\delta\R^*$. A manifold endowed with a metric $g$ whose operator $\delta\R^*$ has a nontrivial kernel is called a Perelman singular manifold. In this paper, we present examples and apply general maximum principles to obtain rigidity or nonexistence results in the underlying setting.

\end{abstract}

\maketitle

\section{Introduction}

In the last few years, the study of smooth metric measure spaces has developed because it arises naturally in the study of self-shrinkers, Ricci solitons, harmonic heat flows, and many other settings. Its natural relevance in modern science can be seen, for example, because Ricci solitons play a crucial role in the theory of Ricci flow, and warped product Einstein metrics are of considerable interest in General Relativity. We highlight that the study of smooth metric measure spaces and their generalized curvatures traces back to Lichnerowicz \cite{l1, l2} and, more recently, Bakry and \'Emery \cite{BE} in the setting of diffusion processes, and it has been an active subject in recent years. For an overview, see, for instance, \cite{CGY},\cite{ grigoryan} and \cite{m}.

Throughout the paper, we use some notions that we introduce now. Let $\Sigma$ be an $n$-dimensional differentiable manifold with a possible non-empty boundary. We recall that a {\it smooth metric measure space} is a Riemanniana manifold $({\Sigma^n}, g)$ endowed with a real-valued smooth function $f: \Sigma \to \mathbb{R}$  which is used as density in the following way: $d\Vol_f = e^{-f}d\Vol$, where  $\Vol$ is the Riemannian measure of $\Sigma$, and $d\sigma_f=e^{-f}d\sigma$, where  $d\sigma$ is the Riemannian measure of $\partial\Sigma$, if the boundary is not empty. Hereafter, we denote a smooth metric measure space by $({\Sigma^n}, g, d\Vol_f )$.

Associated to this structure we have a second order differential operator defined by
$$\Delta_f u= e^f{\rm div}(e^{-f}\nabla u),$$
acting on space of smooth functions. This operator is known in the literature as {\it Drift Laplacian}.
Following  \cite{BE}, the natural generalization of Ricci curvature is defined by
$$\Ric_f=\Ric + \nabla^2 f,$$
which is known as {\it Bakry-\'Emery Ricci tensor} or simply by {\it weighted Ricci curvature}. We also recall the Perelman scalar curvature on $(\Sigma, g, d\Vol_f)$ which is defined by $$\mathcal{R}_f=R+2\Delta f-|\nabla f|^2,$$ where $R$ is the scalar curvature of $M,$ and was introduced by Perelman in \cite{gp}. Note that the Perelman scalar curvature is not, in general, the trace of the weighted Ricci curvature and so many differences in the studied theory arise.

Coming back to the classical case of the scalar curvature functional, the elements in the kernel of the adjoint of the underlying linearization, commonly referred to as static metrics, have inspired numerous developments in both mathematics and physics. We recall that a Riemannian metric $g$ on a manifold $M$ is called static if the linearized scalar curvature map has a nontrivial cokernel. This is equivalent to stating that the equation
\begin{align}
-(\Delta u)g+\nabla^2u-u\ric=0 \label{Staticequation}
\end{align}
admits some nontrivial solution. Such a solution is referred to as a \textit{static potential}. From the physical pointview, for example, a solution of (\ref{Staticequation}) in a 3-manifold allows to construct a space-time satisfying the vacuum Einstein equations (with cosmological constant), whose properties, phy\-si\-ca\-lly interpreted,
justify the name \textit{static} (see, for example, \cite{corvino}). Also, many interesting geometric properties of manifolds that admit a static potential have already been established. For example, the scalar curvature of such a manifold must be locally constant, $\Sigma=u^{-1}(0)$ (called apparent horizon) is a totally geodesic regular hypersurface, and the restrictions of $|\nabla u|$ to the connected components of the boundary (called surface gravities) are constants. For a good historical survey and demonstration of the some of the above mentioned properties, see \cite{HMR} and references therein.

This work focuses on studying properties and characterizations of Perelman singular manifolds, which are metrics in the kernel of the linearization of the Perelman scalar curvature. As recently observed by \cite{Ho} this study has a potential value to study the prescription of weighted scalar curvature on smooth metric measure spaces. Building on concepts and results from static metrics, we extend the theory to a weighted version. For instance, we observe that Perelman singular manifolds satisfy an interesting weighted version of the static equation:
\begin{equation}
-(\Delta_fu)\, g+\nabla^2u-u\ric_f=0. \label{perelmansing}
\end{equation}
In this direction, we investigate properties, examples, existence, and rigidity of these models, providing results in both compact and noncompact cases, and establishing their relationship with Ricci solitons. For example, inspired by an important rigidity result related to the area of the boundary in a 3-dimensional static manifold (given by Boucher, Gibbons, and Horowitz \cite{BGH} and Shen \cite{s}, independently), we obtain an estimate for the area of the boundary in a Perelman singular manifold (see Theorem \ref{thm1}).  We mention that area estimates are useful in the literature involving problems related to static metrics, Ricci solitons, critical metrics of the volume functional, and quasi-Einstein manifolds (see, for example, \cite{freitas} and references therein).  In addition to these estimates, we note the relation between Perelman singular manifolds and almost Ricci solitons, with an interesting rigidity result for the hemisphere (see Theorems \ref{thm2}). Furthermore, working in the noncompact case and using a generalized maximum principle, we provide a new characterization of the Gaussian Ricci Soliton (see Theorem \ref{thm3}).

As a final point of interest, we highlight the close relationship between Perelman singular manifolds and the construction of gradient Ricci solitons realized by warped products. It is noteworthy that the base of every gradient Ricci almost soliton realized as a warped product exhibits a structure given by an equation related to \eqref{perelmansing} (see \cite{naza1} and \cite{pina}).

{\bf Outline of the paper}:  In Section \ref{sct2}, we introduce the variational formulae and subsequently study the linearization of the Perelman scalar curvature. We observe that the elements in the kernel of the adjoint of the linearization of the Perelman scalar curvature must satisfy the equation (\ref{perelmansing}). With this understanding, we generate non-trivial examples of such a class. In Section \ref{sct3}, we investigate the existence of compact Perelman singular manifolds with a non-empty boundary, providing, in particular, an estimate for the area of the boundary in this setting, along with some integral conditions that relate this class to almost Ricci solitons. Finally, in Section \ref{sct4}, we employ known maximum principles to derive results on rigidity and non-existence in the complete noncompact case.

\section{Preliminaries}\label{sct2}
Following the idea of Fischer and Marsden as presented in \cite{FM}, we consider the Perelman map $\mathcal{R}_f : \mathcal{M}\to\mathbb{R}$, which associates each metric $g\in\mathcal{M}$ with its Perelman scalar curvature $\mathcal{R}_f(g)$, where $f$ is a fixed smooth function. Here, $\mathcal{M}$ is the space of Riemannian metrics on $\Sigma$. Furthermore, the symbols $\nabla f$, $\nabla^2f$ and $\Delta f$ denote the gradient, Hessian and the Laplacian of $f$ with respect to the metric $g$, respectively. After a straightforward computation, we obtain the linearization of $\mathcal{R}_f$ at a metric $g$ and its $L_f^2$-formal adjoint (here, $L^2_f$ denotes square integrable functions with respect to the weighted measure). Therefore,
\begin{proposition}\label{itens} The first variation of the Laplacian, the norm square of the gradient, and the scalar curvature are:
\begin{itemize}
\item[(i)]$\delta_h\Delta f=-\langle\nabla^2f, h\rangle-\langle \nabla f, {\rm div} h\rangle+\frac{1}{2}\langle \nabla f,\nabla \tr h\rangle;$
\item [(ii)]$\delta_h|\nabla f|^2=-h(\nabla f,\nabla f);$
\item[(iii)]$\delta_hR=-\Delta ({\rm tr}\, h)+{\rm div}({\rm div} h)-\langle h, {\rm Ric}\rangle, $
\end{itemize}
where $\delta_{h}$ means the variation of the metric in the direction of $h$.
\end{proposition}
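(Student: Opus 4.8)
The plan is to establish each of the three variational formulae by a direct computation, working with a one-parameter family of metrics $g_t = g + th$ and differentiating the relevant geometric quantities at $t=0$. Throughout, I would fix a local frame and use normal coordinates at a point to streamline the curvature computations, keeping in mind that $f$ is held fixed, so only the metric-dependence of the operators $\nabla$, $\nabla^2$, $\Delta$ and of $R$ contributes. The key auxiliary ingredients are the standard first-variation formulae for the Christoffel symbols, namely $\delta_h \Gamma^k_{ij} = \frac{1}{2} g^{k\ell}(\nabla_i h_{j\ell} + \nabla_j h_{i\ell} - \nabla_\ell h_{ij})$, and for the inverse metric, $\delta_h g^{ij} = -h^{ij}$, together with the contracted second Bianchi identity when simplifying the scalar curvature term.

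For item (ii), the computation is immediate: since $|\nabla f|^2 = g^{ij}\partial_i f\,\partial_j f$ and $f$ is fixed, differentiating gives $\delta_h|\nabla f|^2 = (\delta_h g^{ij})\partial_i f\,\partial_j f = -h^{ij}\partial_i f\,\partial_j f = -h(\nabla f,\nabla f)$. For item (i), I would write $\Delta f = g^{ij}(\partial_i\partial_j f - \Gamma^k_{ij}\partial_k f)$ and differentiate both the $g^{ij}$ factor and the Christoffel symbols. The first piece yields $-h^{ij}\nabla_i\nabla_j f = -\langle \nabla^2 f, h\rangle$. The $\delta_h\Gamma$ piece, after contracting with $g^{ij}$ and using the identities $g^{ij}\nabla_i h_{j\ell} = ({\rm div}\,h)_\ell$ and $g^{ij}\nabla_\ell h_{ij} = \nabla_\ell(\tr h)$, produces exactly $-\langle \nabla f, {\rm div}\,h\rangle + \frac{1}{2}\langle \nabla f, \nabla \tr h\rangle$; combining these gives (i). Item (iii) is the classical Fischer--Marsden formula for the linearized scalar curvature, obtained by differentiating $R = g^{ij}R_{ij}$: the variation of $g^{ij}$ contributes $-\langle h, \Ric\rangle$, and the variation of the Ricci tensor, $\delta_h R_{ij} = \nabla_k(\delta_h\Gamma^k_{ij}) - \nabla_i(\delta_h\Gamma^k_{kj})$ (the Palatini-type identity), after contraction with $g^{ij}$ collapses to $-\Delta(\tr h) + {\rm div}({\rm div}\,h)$.

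The main obstacle, such as it is, lies in item (iii): correctly handling the variation of the Ricci curvature and carrying out the index gymnastics so that the divergence-of-divergence and Laplacian-of-trace terms emerge cleanly without sign errors. This is a standard but somewhat delicate calculation; one must be careful that $\delta_h\Gamma$ is a genuine tensor (the difference of two connections), so that the covariant expressions above are coordinate-independent, and that the contraction of $\nabla_k(\delta_h\Gamma^k_{ij})$ with $g^{ij}$ indeed gives ${\rm div}({\rm div}\,h) - \frac{1}{2}\Delta(\tr h)$ while the second term contributes $-\frac{1}{2}\Delta(\tr h)$, summing to the stated result. Since items (i) and (ii) are elementary and (iii) is the well-known Fischer--Marsden identity, I would present the derivation of (iii) in detail and merely indicate the routine steps for (i) and (ii).
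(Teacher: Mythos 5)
Your computation is correct, and it is exactly the standard derivation (variation of the inverse metric and of the Christoffel symbols, plus the Palatini identity for $\delta_h\mathrm{Ric}$) that the paper does not reproduce: its ``proof'' is simply a citation to Besse, Thm.\ 1.174 and Prop.\ 1.184, where this same calculation is carried out. The only cosmetic remark is that the contracted second Bianchi identity you mention is not actually needed for (iii); the contraction of the Palatini formula already yields $-\Delta(\tr h)+\mathrm{div}(\mathrm{div}\,h)$ as you describe.
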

\begin{proof}
See \cite[Thm. $1.174$ and Prop. $1.184.$]{B}. 
\end{proof}

Throughout this paper we use the notion of $f$-divergence, defined by
$${\rm div}_fh=e^{f}{\rm div}(e^{-f}h)={\rm div} h-h(\nabla f,\cdot),$$
where $h\in\mathcal{S}_0^{k,2}$, and $\mathcal{S}_0^{k,2}$ is the space of $k$ times differentiable $(0,2)$-symmetric tensors. Using that and the previous proposition, we announce the first variation of $\mathcal{R}_f$. The following expression is well-known in the literature for the Riemannian case. Here, for the sake of completeness, we present a demonstration in the weighted case.

\begin{proposition} The linearization of the map $g\mapsto \mathcal{R}_f(g)$ is given by
$$\delta_h\R(g)=-\Delta_f(\tr h)-\langle h, \ric_f\rangle+{\rm div}_f({\rm div}_fh), \, \, \mbox{for} \, \, h\in\mathcal{S}_0^{k,2}. $$
Moreover, its $L_f^2$-formal adjoint is
$$(\delta \mathcal{R}_f(g))^*u=-(\Delta_fu)\, g+\nabla^2u-u\ric_f, $$
for  $u\in C^\infty$, and $u|_{\partial\Sigma}=0$ if $\partial\Sigma\neq\emptyset$.
\end{proposition}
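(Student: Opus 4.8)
The plan is to compute $\delta_h\R(g)$ directly from the definition $\R = R + 2\Delta f - |\nabla f|^2$ by linearity of the variation, and then integrate by parts against the weighted measure $d\Vol_f$ to extract the formal adjoint. First I would apply Proposition \ref{itens} term by term: from (iii) the variation of $R$ contributes $-\Delta(\tr h)+\di(\di h)-\langle h,\ric\rangle$; from (i), multiplied by $2$, the variation of $2\Delta f$ contributes $-2\langle\nabla^2 f,h\rangle-2\langle\nabla f,\di h\rangle+\langle\nabla f,\nabla\tr h\rangle$; and from (ii), with the sign flipped, the variation of $-|\nabla f|^2$ contributes $+h(\nabla f,\nabla f)$. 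The task is then to reorganize these six terms into the claimed form $-\Delta_f(\tr h)-\langle h,\ric_f\rangle+\di_f(\di_f h)$.

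The key algebraic step is recognizing how the drift terms assemble. Writing $\Delta_f(\tr h)=\Delta(\tr h)-\langle\nabla f,\nabla\tr h\rangle$, the terms $-\Delta(\tr h)+\langle\nabla f,\nabla\tr h\rangle$ collapse to $-\Delta_f(\tr h)$. Similarly, $-\langle h,\ric\rangle-2\langle\nabla^2 f,h\rangle$ does \emph{not} directly give $-\langle h,\ric_f\rangle=-\langle h,\ric\rangle-\langle\nabla^2 f,h\rangle$; the extra $-\langle\nabla^2 f,h\rangle$ together with $+h(\nabla f,\nabla f)$ must be absorbed by expanding $\di_f(\di_f h)$. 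Indeed, $\di_f(\di_f h)=\di(\di h)-\di h(\nabla f)-\nabla_{\nabla f}(\di h\cdot)$-type terms; more carefully, using $\di_f h=\di h-h(\nabla f,\cdot)$ twice and the Bochner-type identity $\di(h(\nabla f,\cdot))=(\di h)(\nabla f)+\langle h,\nabla^2 f\rangle$, one finds $\di_f(\di_f h)=\di(\di h)-2(\di h)(\nabla f)-\langle h,\nabla^2 f\rangle+h(\nabla f,\nabla f)+\text{(first-order drift correction)}$. I would track these corrections precisely and verify that all first-order-in-$\nabla f$ terms match the $-2\langle\nabla f,\di h\rangle$ coming from item (i), leaving exactly the desired expression.

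Once the linearization is established, the adjoint follows from the defining relation $\int_\Sigma (\delta_h\R)\,u\,d\Vol_f=\int_\Sigma \langle h,(\delta\R)^*u\rangle\,d\Vol_f$ for compactly supported $h$ (or with $u|_{\partial\Sigma}=0$ when the boundary is nonempty, to kill boundary terms). Here I would use the three weighted integration-by-parts identities: $\int (\Delta_f v)\,u\,d\Vol_f=\int v\,(\Delta_f u)\,d\Vol_f$, $\int (\di_f X)\,u\,d\Vol_f=-\int \langle X,\nabla u\rangle\,d\Vol_f$ for a vector field $X$, and its $(0,2)$-tensor analogue $\int (\di_f(\di_f h))\,u\,d\Vol_f=\int \langle h,\nabla^2 u\rangle\,d\Vol_f$, all valid because $\di_f$ is the formal adjoint of $-\nabla$ with respect to $d\Vol_f$. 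Applying these: the $-\Delta_f(\tr h)$ term pairs with $-(\Delta_f u)g$ (since $\tr h=\langle h,g\rangle$), the $-\langle h,\ric_f\rangle$ term pairs with $-u\,\ric_f$, and the $\di_f(\di_f h)$ term pairs with $\nabla^2 u$, yielding $(\delta\R)^*u=-(\Delta_f u)g+\nabla^2 u-u\,\ric_f$.

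The main obstacle I anticipate is bookkeeping the first-order drift terms in the identity for $\di_f(\di_f h)$: the iterated $f$-divergence produces several terms involving $\nabla f$ acting on $h$ in different slots, and one must use the product rule for $\di$ together with $\nabla^2 f$ being symmetric to see the cancellations. A secondary subtlety is making sure no boundary terms survive in the adjoint computation — this is precisely why the hypothesis $u|_{\partial\Sigma}=0$ is imposed, and one should note that $h$ can be taken compactly supported in the interior for the purpose of identifying the adjoint operator, with the boundary condition on $u$ handling the general pairing. Apart from these, the computation is a routine if somewhat lengthy exercise in weighted tensor calculus.
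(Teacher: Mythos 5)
Your proposal is correct and follows essentially the same route as the paper: apply the variational formulas of Proposition \ref{itens} term by term, regroup the drift terms using $\Delta_f(\tr h)=\Delta(\tr h)-\langle\nabla f,\nabla\tr h\rangle$ and the expansion ${\rm div}_f({\rm div}_fh)={\rm div}({\rm div} h)-2\langle {\rm div} h,\nabla f\rangle-\langle h,\nabla^2f\rangle+h(\nabla f,\nabla f)$ (your anticipated ``first-order drift correction'' turns out to be empty), and then obtain the adjoint by weighted integration by parts exactly as in the paper's use of Stokes' theorem.
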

\begin{proof} Using the items in Proposition \ref{itens}, after a direct computation we get:
\begin{eqnarray}\label{derivadaRf}
\delta_h\R(g) &=&-\Delta (\tr h)+{\rm div}({\rm div} h)-\langle h, \ric\rangle-2\langle h, \nabla^2 f\rangle\\ \nonumber
& &-2\langle \nabla f,{\rm div} h\rangle+\langle\nabla f,\nabla \tr h\rangle +h(\nabla f,\nabla f)\\ \nonumber
&=&-\Delta_f(\tr h)-\langle h,\ric_f\rangle+{\rm div}({\rm div} h)-\langle h,\nabla^2f\rangle\\ \nonumber
&&-2\langle\nabla f,{\rm div} h\rangle+h(\nabla f,\nabla f),
\end{eqnarray}
and
\begin{eqnarray}\label{divf}
{\rm div}({\rm div}_fh)&=&{\rm div}({\rm div} h)-{\rm div}(h(\nabla f,\cdot))\nonumber\\
&=&{\rm div}({\rm div} h)-\langle {\rm div} h,\nabla f\rangle-\langle h,\nabla^2f\rangle.
\end{eqnarray}
Moreover,
\begin{eqnarray}\label{divdivf}
{\rm div}_f({\rm div}_fh)&=&{\rm div}({\rm div}_f h)-\langle {\rm div}_f h,\nabla f\rangle\nonumber\\
&=&{\rm div}({\rm div} h)-\langle {\rm div} h,\nabla f\rangle-\langle h,\nabla^2f\rangle-\langle {\rm div} h,\nabla f\rangle+h(\nabla f,\nabla f).
\end{eqnarray}

From equations \eqref{derivadaRf}, \eqref{divf} and \eqref{divdivf} we obtain
$$\delta_h\R=-\Delta_f(\tr h)-\langle h, \ric_f\rangle+{\rm div}_f({\rm div}_fh).$$
Using Stokes' theorem we obtain
$$\langle (\delta\R(g))^\ast u,h\rangle_{L_f^2}=\langle u, \delta_h\R(g)\rangle_{L_f^2}=\int_\Sigma\langle-\Delta_fu\, g+\nabla^2u-u \ric_f, h\rangle\, d\Vol_f,$$
for all $h\in \mathcal{S}_0^{k,2}$. So, the $L^2_f$-formal adjoint is given by 
$$ (\delta\mathcal{R}_f(g))^*u=-(\Delta_fu)\, g+\nabla^2u-u\ric_f,$$
and so we conclude the result.
\end{proof}

In \cite{CGY2}, the authors introduced the notion of $Q$-singular space based on the non-triviality of the kernel of $(\delta Q)^\ast$, where $Q$ is a curvature function associated with a geometric problem. It is worth noting that $Q$-singular spaces are an attractive subject, and numerous studies have been conducted; see \cite{CLY:19} for a general setting.

Building on their work, we introduce the following definition:

\begin{definition}
A complete smooth measure space $(\Sigma, g, d\Vol_f)$ is said a Perelman singular space $($or P-singular$)$  if $${\rm Ker}(\delta\mathcal{R}_f(g))^*\neq \{0\}.$$ 
\end{definition}
Sometimes we will write $(\Sigma, g, d\Vol_f, u)$ as a $P$-singular space meaning that $u$ is a non-trivial function in the kernel of $(\delta\mathcal{R}_f(g))^*$.
\begin{example}
Static manifolds are examples of $P$-singular spaces when $f$ is a constant function.
\end{example}

\begin{example}
A compact gradient steady Ricci soliton without boundary is a P-singular space. Indeed, since $\ric_f=0$, the function $u$ equal to one is in the kernel of $(\delta\mathcal{R}_f)^*$.
\end{example}

\begin{example}\label{Gaussian}  Consider the Euclidean space $\mathbb{R}^n$ equipped with its canonical metric $g_{\text{can}}$ and the Gaussian measure $\exp\left(-\frac{|x|^2}{2}\right)dx$. The triple, denoted as $(\mathbb{R}^n, g_{\text{can}}, \exp\left(-\frac{|x|^2}{2}\right)dx)$, is known as a Gaussian space. After a straightforward computation, we obtain:
$$(\delta\mathcal{R}_f)^*u(x) =-(\Delta u - \langle \nabla u, x\rangle)g_{can} +\nabla^2 u - ug_{can}.$$

For each $v\in\real^n$, consider $u_v : \real^n\to \real$ defined by $u_v(x)=\langle x, v\rangle.$ A direct computation give us that $\nabla u=v$ and $\nabla^2 u=0$. Moreover, $\ric_f = g_{can}$. So,
$$(\delta\mathcal{R}_f)^*u_v(x) = -(0-\langle v,x\rangle)g_{can}+ 0 -ug_{can} = ug_{can} - ug_{can}=0.$$
Thus, the Gaussian space is an example of a complete non-compact  P-singular space.
\end{example}

\begin{example}\label{sphere}Let $\mathbb{S}^n$ denote the unit sphere equipped with the canonical metric inherited from the Euclidean space. Let $v$ and $w$ be fixed vectors in $\mathbb{R}^{n+1}$ such that $\langle v, w\rangle=0$. Consider the measure $d\Vol_f = \exp(-\langle x, v\rangle), d\omega$, where $d\omega$ represents the Riemannian volume element on the sphere. Define the smooth function $u_w:\mathbb{S}^n\to \mathbb{R}$ as $u_w(x) = \langle x,w\rangle$.

A direct computation shows that $\nabla u_w = w^\top$ and $\nabla^2u_w = -u_wg_{can}$. Since  $\langle v, w\rangle=0$, after a straightforward computation we have that $$\Delta_fu_w = (-n+f)u_w \, \, \mbox{and} \, \, \ric_f = (n-1-f)g_{can}.$$
So,
$$(\delta\mathcal{R}_f)^*u_w(x) = (n-f)u_w\delta -u_wg_{can} - u_w(n-1-f)g_{can}=0,$$
and thus $(\mathbb{S}^n, \delta,  \exp(-\langle x, v\rangle)\, d\omega)$ is an example of a closed P-singular space.
\end{example}

Next, we will present some formulas that will help us characterize $P$-singular spaces. Consider a $P$-singular space denoted by $(\Sigma, g, e^{-f}d\text{Vol}, u)$. Through a straightforward computation, we derive the following expressions:

\begin{proposition}\label{for1} For $u\in C^\infty(\Sigma)$, the following holds:
\begin{enumerate}
\item[(i)] $\di_f(\ric_f) = \frac{1}{2}d\mathcal{R}_f $;
\item[(ii)] $\di_f((\Delta_fu)g) = d(\Delta_fu) - \Delta_fu\, df;$
\item[(iii)] $\di_f(\nabla^2u) = \di_f((\Delta_fu)g) + \ric_f(\nabla u,\cdot) + \Delta_fu\, df.$ 
\end{enumerate}
\end{proposition}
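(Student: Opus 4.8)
The plan is to derive all three identities from the classical unweighted commutation formulas together with the bookkeeping identity $\di_f h=\di h-h(\nabla f,\cdot)$. For (i), I would begin with the contracted second Bianchi identity $\di(\ric)=\frac12 dR$ and the Ricci commutation formula for a gradient field, $\di(\nabla^2 f)=d(\Delta f)+\ric(\nabla f,\cdot)$. Writing $\ric_f=\ric+\nabla^2 f$ and expanding $\di_f(\ric_f)=\di(\ric)+\di(\nabla^2 f)-\ric(\nabla f,\cdot)-\nabla^2 f(\nabla f,\cdot)$, and noting $\nabla^2 f(\nabla f,\cdot)=\frac12 d|\nabla f|^2$, the two copies of $\ric(\nabla f,\cdot)$ cancel, leaving $\frac12 dR+d(\Delta f)-\frac12 d|\nabla f|^2=\frac12 d\mathcal{R}_f$ by the definition $\mathcal{R}_f=R+2\Delta f-|\nabla f|^2$.

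For (ii) it suffices to recall that $\di(\phi g)=d\phi$ for every smooth $\phi$, so that $\di_f(\phi g)=d\phi-\phi\,df$; specializing to $\phi=\Delta_f u$ gives the statement. For (iii), apply the Ricci commutation formula to $\nabla u$, namely $\di(\nabla^2 u)=d(\Delta u)+\ric(\nabla u,\cdot)$, and rewrite $d(\Delta u)$ using $\Delta_f u=\Delta u-\langle\nabla f,\nabla u\rangle$ together with $d\langle\nabla f,\nabla u\rangle=\nabla^2 f(\nabla u,\cdot)+\nabla^2 u(\nabla f,\cdot)$. Subtracting $\nabla^2 u(\nabla f,\cdot)$ to pass from $\di$ to $\di_f$ and using the symmetry of the Hessian to cancel the two terms $\nabla^2 u(\nabla f,\cdot)$, one obtains $\di_f(\nabla^2 u)=d(\Delta_f u)+(\ric+\nabla^2 f)(\nabla u,\cdot)=d(\Delta_f u)+\ric_f(\nabla u,\cdot)$. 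Inserting (ii) in the form $d(\Delta_f u)=\di_f((\Delta_f u)g)+\Delta_f u\,df$ then produces the asserted identity.

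All three are pointwise identities valid for arbitrary $u\in C^\infty(\Sigma)$, so there is no analytic subtlety and no genuine obstacle; the only places to be careful are the sign convention in the commutation formula $\di(\nabla^2\phi)=d(\Delta\phi)+\ric(\nabla\phi,\cdot)$ and the systematic use of the symmetry of $\nabla^2 u$ so that the cross terms match up when $\di$ is replaced by $\di_f$. This proposition is essentially preparatory bookkeeping for the rigidity and area estimates established later.
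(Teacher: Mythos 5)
Your proposal is correct and follows essentially the same route as the paper: item (i) from the contracted second Bianchi identity $\di(\ric)=\frac12 dR$ together with $\di(\nabla^2 f)=d(\Delta f)+\ric(\nabla f,\cdot)$, and items (ii)--(iii) by direct manipulation of the definition $\di_f h=\di h-h(\nabla f,\cdot)$. You merely spell out the bookkeeping that the paper leaves implicit, and all the cancellations you use check out.
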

\begin{proof}
The first formula follows from $\di(\ric)=\frac{1}{2}dR$  and $\di(\nabla^2f) = d\Delta f + \ric(\nabla f,\cdot)$, which are consequences of the Bianchi and Ricci identities. The second and third formulas simply represent manipulations involving the $f$-divergence.
\end{proof}

\begin{corollary}\label{Rf} For $u \in \text{Ker}(\delta\mathcal{R}_f)^*$, the following holds:
$$\frac{1}{2}u\, d\mathcal{R}_f = \Delta_fu\, df.$$
\end{corollary}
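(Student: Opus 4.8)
The plan is to take the $f$-divergence of the equation that defines a $P$-singular space and then read off the identity term by term from Proposition \ref{for1}. Recall that $u\in\text{Ker}(\delta\mathcal{R}_f)^*$ is equivalent to
\begin{equation*}
\nabla^2u=(\Delta_fu)\,g+u\,\ric_f .
\end{equation*}
First I would apply $\di_f$ to both sides of this identity. For the left-hand side, item (iii) of Proposition \ref{for1} gives directly
\begin{equation*}
\di_f(\nabla^2u)=\di_f\big((\Delta_fu)g\big)+\ric_f(\nabla u,\cdot)+\Delta_fu\,df .
\end{equation*}

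For the right-hand side, the term $\di_f((\Delta_fu)g)$ reproduces itself, so the only computation left is $\di_f(u\,\ric_f)$. Here I would use the Leibniz rule for the $f$-divergence of the product of a function and a symmetric $(0,2)$-tensor, namely $\di_f(uT)=u\,\di_f(T)+T(\nabla u,\cdot)$, which is immediate from the ordinary Leibniz rule for $\di$ together with the definition $\di_f h=\di h-h(\nabla f,\cdot)$. Applying this with $T=\ric_f$ and then invoking item (i) of Proposition \ref{for1}, i.e. $\di_f(\ric_f)=\tfrac12\,d\mathcal{R}_f$, yields
\begin{equation*}
\di_f(u\,\ric_f)=\tfrac{u}{2}\,d\mathcal{R}_f+\ric_f(\nabla u,\cdot) .
\end{equation*}

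Finally I would equate the two expressions obtained from the two sides of the $P$-singular equation,
\begin{equation*}
\di_f\big((\Delta_fu)g\big)+\ric_f(\nabla u,\cdot)+\Delta_fu\,df=\di_f\big((\Delta_fu)g\big)+\tfrac{u}{2}\,d\mathcal{R}_f+\ric_f(\nabla u,\cdot),
\end{equation*}
and cancel the common terms $\di_f((\Delta_fu)g)$ and $\ric_f(\nabla u,\cdot)$ on both sides, which leaves exactly $\Delta_fu\,df=\tfrac12\,u\,d\mathcal{R}_f$, the asserted identity.

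I do not anticipate any real obstacle: the whole argument is a single divergence computation, whose only substantive ingredient is the weighted contracted Bianchi identity (i) of Proposition \ref{for1}; everything else is bookkeeping. The one point to be careful about is precisely that the terms $\di_f((\Delta_fu)g)$—whose explicit form is never needed—occur identically on both sides and therefore drop out, so that no knowledge of $\di_f((\Delta_fu)g)$ (item (ii)) is actually required for this particular corollary, although item (ii) is of course what makes item (iii) usable elsewhere.
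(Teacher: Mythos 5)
Your proposal is correct and follows essentially the same route as the paper: take the $f$-divergence of the $P$-singular equation, use item (iii) of Proposition \ref{for1} on the Hessian term, and combine the Leibniz rule $\di_f(u\,\ric_f)=u\,\di_f(\ric_f)+\ric_f(\nabla u,\cdot)$ with the weighted contracted Bianchi identity (i), after which the terms $\di_f((\Delta_fu)g)$ and $\ric_f(\nabla u,\cdot)$ cancel. This is exactly the computation in the paper's proof, merely written with the defining equation rearranged.
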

\begin{proof} Taking the $f$-divergence of $(\delta\mathcal{R}_f(g))^*u$ and applying Proposition \ref{for1}, we get:
\begin{eqnarray*}
0&=& \di_f(-(\Delta_fu)g+\nabla^2u) - \di_f(u\ric_f)\\
&=& \ric_f(\nabla u,\cdot) + \Delta_fu\, df 
- u\di_f(\ric_f) - \ric_f(\nabla u,\cdot)\\
&=& \Delta_fu\, df - u\frac{1}{2}d\mathcal{R}_f,
\end{eqnarray*}
and so we conclude the result.

\end{proof}

\begin{corollary}\label{cor2.1} If $u\in{\rm Ker}(\delta\mathcal{R}_f)^*$, then there exists a smooth real function $\sigma$ on $\Sigma$ such that 
$$d\mathcal{R}_f = -2\sigma\cdot df\, \, \mbox{and} \, \, \Delta_fu = -\sigma\cdot u.$$
\end{corollary}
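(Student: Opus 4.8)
The plan is to define $\sigma$ explicitly off the zero locus $Z:=u^{-1}(0)$ and then argue that it extends smoothly across $Z$. First I would set $\sigma:=-\Delta_f u/u$ on $\Sigma\setminus Z$; this is smooth there, gives $\Delta_f u=-\sigma u$ on $\Sigma\setminus Z$ by construction, and, dividing the identity $\tfrac12 u\,d\mathcal{R}_f=\Delta_f u\,df$ of Corollary \ref{Rf} by $\tfrac12 u\neq 0$, gives $d\mathcal{R}_f=-2\sigma\,df$ there as well. Since a nontrivial $u$ cannot vanish on a nonempty open set (this comes out of the argument in the next paragraph), $\Sigma\setminus Z$ is dense, so once $\sigma$ is known to extend to a smooth function on $\Sigma$ both identities follow on all of $\Sigma$ by continuity. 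Thus everything reduces to the regularity of $\sigma$ along $Z$, which I expect to be the heart of the matter.

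To handle this I would establish two facts. The first is that $0$ is a regular value of $u$. I would trace $(\delta\mathcal{R}_f(g))^*u=0$, i.e. $\nabla^2u=(\Delta_f u)g+u\,\ric_f$; combining the trace $\Delta u=n\,\Delta_f u+u\,\tr\ric_f$ with $\Delta u=\Delta_f u+g(\nabla f,\nabla u)$ yields $(n-1)\Delta_f u=g(\nabla f,\nabla u)-u\,\tr\ric_f$, and substituting back,
\begin{equation*}
\nabla^2u=\frac{g(\nabla f,\nabla u)-u\,\tr\ric_f}{n-1}\,g+u\,\ric_f ,
\end{equation*}
so $\nabla^2u$ is a linear expression in $(u,\nabla u)$ with smooth coefficients. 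Restricting to a geodesic $\gamma$ from a point $p$ and putting $\varphi=u\circ\gamma$, $X=\nabla u\circ\gamma$, the pair $(\varphi,X)$ solves a first-order linear ODE system along $\gamma$; hence if $(u,\nabla u)(p)=0$ then $u\equiv 0$ on a normal neighbourhood of $p$. So $\{u=0,\ \nabla u=0\}$ is open as well as closed, and connectedness of $\Sigma$ together with $u\not\equiv 0$ forces it to be empty; in particular $\nabla u\neq 0$ on $Z$ (and $\Sigma\setminus Z$ is dense, as used above), so $Z$ is a smooth embedded hypersurface with $u$ a local defining function.

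The second fact is that $\Delta_f u$ vanishes on $Z$. For $p\in Z$, Corollary \ref{Rf} at $p$ gives $(\Delta_f u)(p)\,df(p)=0$; if $(\Delta_f u)(p)\neq 0$ this forces $df(p)=0$, hence $\nabla f(p)=0$, and then the identity $(n-1)\Delta_f u=g(\nabla f,\nabla u)-u\,\tr\ric_f$ evaluated at $p$ (where $u(p)=0$) gives $(\Delta_f u)(p)=0$, a contradiction; thus $(\Delta_f u)|_Z\equiv 0$. Since $u$ vanishes to exactly first order along the regular hypersurface $Z$, Hadamard's lemma (in coordinates where $u$ is a coordinate function) shows $-\Delta_f u/u$ extends to a smooth $\sigma$ on $\Sigma$; by density of $\Sigma\setminus Z$ the relations $\Delta_f u=-\sigma u$ and $u\big(\tfrac12 d\mathcal{R}_f+\sigma\,df\big)=0$ then hold on all of $\Sigma$, the latter giving $d\mathcal{R}_f=-2\sigma\,df$. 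The only genuinely delicate point is this combined regularity analysis along $Z$ --- proving at once that $\nabla u\neq 0$ there and that $\Delta_f u$ is divisible by $u$ --- everything else being a formal consequence of Corollary \ref{Rf} and the traced equation.
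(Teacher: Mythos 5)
Your argument is correct, but it takes a genuinely different route from the paper's. The paper proves Corollary \ref{cor2.1} by a pointwise linear-algebra observation: for each $p$ and $v\in T_p\Sigma$, Corollary \ref{Rf} says the vector $a=\bigl((d\mathcal{R}_f)_p(v),\,\Delta_f u(p)\bigr)\in\mathbb{R}^2$ is orthogonal to $b=\bigl(u(p),\,-2\,df_p(v)\bigr)$, hence parallel to $b^{\perp}=\bigl(2\,df_p(v),\,u(p)\bigr)$, and $-\sigma(p)$ is read off as the proportionality factor; the independence of the factor from $v$, the smoothness of $\sigma$, and the degenerate locus where $u$ and $df$ vanish simultaneously are left implicit (``the result follows from this property''). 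You instead construct $\sigma=-\Delta_f u/u$ off the nodal set $Z=u^{-1}(0)$ and substantiate its smooth extension: the traced equation expresses $\nabla^2u$ linearly in $(u,\nabla u)$, so a Corvino/Fischer--Marsden-type ODE unique continuation argument shows $u$ and $\nabla u$ cannot vanish simultaneously (so $0$ is a regular value and $\Sigma\setminus Z$ is dense --- the same mechanism the paper invokes later via \cite{corvino} in Section \ref{sct3}); then $\Delta_f u|_Z=0$ together with Hadamard division across the regular hypersurface $Z$ yields a smooth global $\sigma$, and both identities follow by density and continuity. What your route buys is exactly the regularity of $\sigma$ and the handling of $\{u=0\}$, which the paper's proof glosses over; what it costs is length and the standing assumptions (needed implicitly by the paper as well) that $u\not\equiv0$ and $\Sigma$ is connected.
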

\begin{proof}
Fixing a point $p\in\Sigma$ and a vector $v\in T_p\Sigma$, we set $a=((d\mathcal{R}_f)_p(v), \Delta_fu )$ and $b=(u, -2df_p(v))$ as vectors in $\real^2$. By Corollary \ref{Rf}, we have $\langle a, b\rangle = 0$, implying that the vector $a$ is parallel to $b^\perp = (2df_p(v), u)$. The result follows from this property.\end{proof}

\begin{proposition}
Assume that $u\in{\rm Ker}(\delta\mathcal{R}_f)^*$ is a positive function. Then,
$$\Delta_{-\ln u}e^{-f} = -e^{-f}(\R - (n-1)\sigma),$$
where $\sigma$ is given in Corollary \ref{cor2.1}.
\end{proposition}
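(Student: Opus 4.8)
The plan is to reduce the identity to a pointwise computation: expand the left-hand side $\Delta_{-\ln u}e^{-f}$ directly from the definition of the drift Laplacian, and then trade the resulting term $\tfrac1u\langle\nabla u,\nabla f\rangle$ for geometric data coming from the $P$-singular equation. Note first that the hypothesis $u>0$ is exactly what makes $-\ln u$ (and the division by $u$ below) legitimate.

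First, I would unwind the definition $\Delta_{-\ln u}\psi=e^{-\ln u}\di(e^{\ln u}\nabla\psi)=\tfrac1u\,\di(u\,\nabla\psi)=\Delta\psi+\tfrac1u\langle\nabla u,\nabla\psi\rangle$ and apply it to $\psi=e^{-f}$. Using $\nabla e^{-f}=-e^{-f}\nabla f$ and $\Delta e^{-f}=-e^{-f}(\Delta f-|\nabla f|^2)$, this gives
$$\Delta_{-\ln u}e^{-f}=-e^{-f}\Big(\Delta f-|\nabla f|^2+\tfrac1u\langle\nabla u,\nabla f\rangle\Big).$$
Since $\R=R+2\Delta f-|\nabla f|^2$, the asserted identity is therefore \emph{equivalent} to the single scalar relation $\tfrac1u\langle\nabla u,\nabla f\rangle=R+\Delta f-(n-1)\sigma$.

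Next, to produce this relation I would take the \emph{trace} of the equation $(\delta\R(g))^*u=-(\Delta_fu)g+\nabla^2u-u\ric_f=0$. Using $\tr g=n$, $\tr\nabla^2u=\Delta u$, and $\tr\ric_f=R+\Delta f$, one obtains $-n\Delta_fu+\Delta u-u(R+\Delta f)=0$. Rewriting $\Delta u=\Delta_fu+\langle\nabla f,\nabla u\rangle$ turns this into $-(n-1)\Delta_fu+\langle\nabla f,\nabla u\rangle-u(R+\Delta f)=0$, and then substituting $\Delta_fu=-\sigma u$ from Corollary \ref{cor2.1} and dividing by $u>0$ yields precisely $\tfrac1u\langle\nabla u,\nabla f\rangle=R+\Delta f-(n-1)\sigma$. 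Plugging this back into the displayed formula of the previous paragraph and collecting terms gives $\Delta_{-\ln u}e^{-f}=-e^{-f}(R+2\Delta f-|\nabla f|^2-(n-1)\sigma)=-e^{-f}(\R-(n-1)\sigma)$, as claimed.

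There is no genuine obstacle here; the computation is elementary once one realizes that the needed auxiliary identity is the \emph{trace} of the $P$-singular equation, rather than its $f$-divergence (the latter being what fed Corollary \ref{cor2.1}). The only points deserving care are the sign conventions in $\nabla e^{-f}$ and $\Delta e^{-f}$, and the fact that $\tr\ric_f=R+\Delta f$ is \emph{not} $\R$ — exactly the subtlety flagged right after the definition of the Perelman scalar curvature, and the place where the factor $n-1$ enters.
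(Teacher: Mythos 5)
Your proof is correct and follows essentially the same route as the paper: trace the $P$-singular equation, substitute $\Delta_fu=-\sigma u$ from Corollary \ref{cor2.1}, and expand $\Delta_{-\ln u}e^{-f}$ from the definition of the drift Laplacian. The only difference is organizational (you expand the left-hand side first and reduce to the scalar identity $\tfrac1u\langle\nabla u,\nabla f\rangle=R+\Delta f-(n-1)\sigma$), which is exactly the "managing the expression" step the paper leaves implicit.
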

\begin{proof}
Tracing $(\delta\mathcal{R}_f(g))^*u$ and managing the expression, we obtain:
$$-(n-1)\Delta_fu - u\R = ue^f(\Delta e^{-f} + \langle\nabla\ln u, \nabla e^{-f}\rangle).$$
Using Corollary \ref{cor2.1} the result follows.
\end{proof}

By the end of this section, under suitable hypotheses on the Perelman scalar curvature, we are able to provide some information about the non-existence of $P$-singular manifolds in the closed case.
\begin{proposition}
Let $f$ be a smooth function on $M$ such that its set of critical points has measure zero. If $M$ is closed and the $P$-scalar curvature is constant, then $(M, g, d\Vol_f)$ cannot be $P$-singular.
\end{proposition}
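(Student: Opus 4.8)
The plan is to argue by contradiction: assume $(M,g,d\Vol_f)$ is $P$-singular, so that there is a nontrivial $u\in\mathrm{Ker}(\delta\mathcal{R}_f(g))^{*}$, and deduce that $f$ must be constant. Since a constant function has all of $M$ as its critical set, and a closed manifold has positive measure, this contradicts the hypothesis that the critical set of $f$ has measure zero.

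First I would exploit the assumption that $\mathcal{R}_f$ is constant, so $d\mathcal{R}_f=0$. By Corollary \ref{Rf} this gives $\Delta_f u\, df=\tfrac{1}{2}u\, d\mathcal{R}_f=0$ at every point of $M$. Off the critical set of $f$ we have $df\neq 0$, hence $\Delta_f u=0$ there; since this set has full measure and $\Delta_f u$ is continuous, $\Delta_f u\equiv 0$ on $M$. As $M$ is closed, integrating the identity $\mathrm{div}(e^{-f}u\nabla u)=e^{-f}\bigl(u\,\Delta_f u+|\nabla u|^{2}\bigr)$ over $M$ yields $\int_M|\nabla u|^{2}\,d\Vol_f=0$, so $u$ is a nonzero constant; write $u\equiv c\neq 0$.

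Next, substituting $u\equiv c$ into the defining equation $(\delta\mathcal{R}_f(g))^{*}u=-(\Delta_f u)g+\nabla^{2}u-u\,\ric_f=0$ forces $c\,\ric_f=0$, hence $\ric_f=0$; i.e.\ $(M,g,f)$ is a compact gradient steady Ricci soliton. Tracing $\ric_f=\ric+\nabla^{2}f=0$ gives $R=-\Delta f$, and inserting this into $\mathcal{R}_f=R+2\Delta f-|\nabla f|^{2}$ produces $\mathcal{R}_f=\Delta f-|\nabla f|^{2}=\Delta_f f$. Calling the constant value $k:=\mathcal{R}_f$, we get $\Delta_f f=k$; integrating this against $d\Vol_f$ over the closed manifold $M$ (using $\Delta_f f\, e^{-f}=\mathrm{div}(e^{-f}\nabla f)$) forces $k\,\Vol_f(M)=0$, so $k=0$. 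Then $\Delta_f f=0$, and the same integration-by-parts argument applied to $f$ in place of $u$ shows that $f$ is constant, which is the desired contradiction. Hence no nontrivial $u$ exists and $M$ is not $P$-singular.

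I do not expect a serious obstacle here; the single point requiring care is the passage to the steady-soliton case and the observation that, once $\ric_f=0$, constancy of $\mathcal{R}_f$ already forces $f$ itself to be constant — which is exactly where the measure-zero hypothesis on the critical set is consumed. (One could alternatively invoke the classical rigidity of compact gradient steady Ricci solitons, which must be Ricci-flat with $f$ constant, but the self-contained integration above is shorter and keeps the argument elementary.)
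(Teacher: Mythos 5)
Your proof is correct and follows essentially the same route as the paper: Corollary \ref{Rf} plus the measure-zero critical set gives $\Delta_f u\equiv 0$, hence $u$ is a nonzero constant, forcing $\ric_f=0$; tracing gives $\mathcal{R}_f=\Delta_f f$, integration gives $\mathcal{R}_f=0$, and then $f$ must be constant, contradicting the hypothesis on its critical set. The only cosmetic difference is that you use weighted integration by parts where the paper invokes the maximum principle for $\Delta_f$ on a closed manifold; both are standard and equivalent here.
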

\begin{proof}
Under our hypotheses, assume by contradiction that there exists a non-trivial function $u$ in the kernel of $(\delta\mathcal{R}_f(g))^*$. Using the fact that $\mathcal{R}_f$ is constant and the hypotheses on $f$, we deduce from Corollary \ref{Rf} that $\Delta_fu=0$. Applying the maximum principle, we conclude that $u$ is constant. Since $u$ belongs to the kernel of $(\delta\mathcal{R}_f)^*$, we obtain that $\ric_f=0$. Tracing $\ric_f$, we get $\mathcal{R}_f-\Delta_f f = 0.$ Integrating this equation and applying the divergence theorem, we conclude that $\mathcal{R}_f$ vanishes on $\Sigma$. Subsequently, by applying the maximum principle once again, we obtain that $f$ is constant, leading to a contradiction.
 \end{proof}
\medskip

Before introducing our next result, we recall that a Riemannian manifold $(\Sigma, g)$ endowed with a smooth function $f$ is called an expander Ricci soliton if there exists a negative constant $\rho$ such that $\Ric_f=\rho g$.

\begin{proposition}
Let $\Sigma$ be a Riemannian manifold satisfying$$\ric_f = (\lambda_0 +\lambda_1f)g \, \,  \mbox{and} \, \,  \mathcal{R}_f = c_0 +c_1f.$$ If $\Sigma$ is closed and $n\lambda_1-c_1<0$, then $f$ is constant and $\Sigma$ is Einstein. In particular, there is no closed expander Ricci soliton.
\end{proposition}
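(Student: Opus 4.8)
The plan is to combine the trace of the weighted Einstein-type equation $\ric_f=(\lambda_0+\lambda_1 f)g$ with the prescription $\mathcal{R}_f=c_0+c_1 f$ so as to reduce everything to a single scalar equation of the shape $\Delta_f f=a+bf$ with $b>0$, and then to exploit that on a closed weighted manifold $\Delta_f$ is a non-positive self-adjoint operator with respect to $d\Vol_f$.

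First I would take the trace of $\ric_f=(\lambda_0+\lambda_1 f)g$, obtaining $R+\Delta f=n(\lambda_0+\lambda_1 f)$. Subtracting this from $\mathcal{R}_f=R+2\Delta f-|\nabla f|^2=c_0+c_1 f$, and using $\Delta_f f=\Delta f-|\nabla f|^2$, I get
$$\Delta_f f=(c_0-n\lambda_0)+(c_1-n\lambda_1)\,f .$$
Writing $a=c_0-n\lambda_0$ and $b=c_1-n\lambda_1=-(n\lambda_1-c_1)$, the hypothesis $n\lambda_1-c_1<0$ gives $b>0$. Setting the constant $c=-a/b$ and using that $\Delta_f$ annihilates constants, the equation becomes $\Delta_f(f-c)=b\,(f-c)$.

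Next I would pair $f-c$ with this identity and integrate over $\Sigma$ against $d\Vol_f$. Since $\Delta_f$ is self-adjoint for $d\Vol_f$, the left side is $\int_\Sigma (f-c)\,\Delta_f(f-c)\,d\Vol_f=-\int_\Sigma|\nabla f|^2\,d\Vol_f\le 0$, while the right side is $b\int_\Sigma (f-c)^2\,d\Vol_f\ge 0$ with $b>0$. Hence both integrals vanish, forcing $f\equiv c$; equivalently, a nonconstant $f-c$ would be an eigenfunction of $\Delta_f$ with the positive eigenvalue $b$, which is impossible on a closed manifold. With $f$ constant, $\nabla^2 f=0$, so $\ric=\ric_f=(\lambda_0+\lambda_1 c)g$ and $\Sigma$ is Einstein.

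For the final assertion, a closed expander Ricci soliton has $\ric_f=\rho g$ with $\rho<0$ constant, so it falls under the hypotheses with $\lambda_0=\rho$ and $\lambda_1=0$. Moreover, applying Proposition \ref{for1}(i) with $\ric_f=\rho g$ gives $\tfrac12\,d\mathcal{R}_f=\di_f(\rho g)=-\rho\,df$, whence $\mathcal{R}_f=c_0-2\rho f$ for some constant $c_0$, i.e.\ $c_1=-2\rho$; therefore $n\lambda_1-c_1=2\rho<0$ and the first part applies, so $f$ is constant and the soliton is trivial (Einstein), proving there is no nontrivial closed expander Ricci soliton. I do not expect a genuine obstacle here: the delicate point is simply recognizing that the sign condition $n\lambda_1-c_1<0$ is exactly what turns the derived equation into the "no nonnegative eigenvalue" situation, and, for the corollary, that the identity in Proposition \ref{for1}(i) is what certifies the hypothesis on $\mathcal{R}_f$ for expanders; everything else is a one-line integration.
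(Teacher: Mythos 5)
Your proposal is correct and follows essentially the same route as the paper: trace $\ric_f=(\lambda_0+\lambda_1 f)g$ and subtract from $\mathcal{R}_f=c_0+c_1f$ to get $\Delta_f f=(c_0-n\lambda_0)+(c_1-n\lambda_1)f$, then use the sign condition to force $f$ constant, and for the expander case derive $\mathcal{R}_f=c_0-2\rho f$ from $\di_f(\ric_f)=\tfrac12 d\mathcal{R}_f$ exactly as the paper does. The only (immaterial) difference is the final step on the scalar equation: the paper evaluates at the maximum and minimum points of $f$, while you integrate against $d\Vol_f$ and use self-adjointness of $\Delta_f$; both are valid one-line closings.
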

\begin{proof}
Tracing $\ric_f$, using the hypotheses and handling the expressions we get 
$$\Delta_ff + (n\lambda_1-c_1)f + (n\lambda_0-c_0)=0.$$ Analyzing the previous equality at the maximum and  the minimum points of $f$, we conclude that $f$ must be constant. Turning to the second part, for $\Ric_f =\lambda_0g$ and $\lambda_0<0$, we note that 
$$ \frac{1}{2}d\mathcal{R}_f={\rm{div}}_f(\Ric_f) ={\rm{div}}_f(\lambda_0 g) = -\lambda_0 df,$$
and so $\R= c_0 - 2\lambda_0f$, which verify the first part  of the lemma and so we conclude the result.
\end{proof}

\section{Rigidity and Nonexistence results for compact with boundary $P$-singular manifolds}\label{sct3}
In this section, under suitable hypotheses, we derive geometric conclusions for the case of a non-empty boundary. Throughout, we assume that $f$ is non-constant and $\Sigma$ is a compact manifold with a non-empty boundary $\partial\Sigma=u^{-1}(0)$, where $u>0$ in the interior of $\Sigma$.

\begin{proposition}
Let $(\Sigma, g, d\Vol_{f}, u)$ be a P-singular manifold with non-empty boundary $\partial\Sigma=u^{-1}(0)$, where $u>0$ in the interior of $\Sigma$. Then,
\begin{itemize}
\item [(i)] $0$ is a regular value of $u$;
\item[(ii)] $\partial\Sigma$ is totally geodesic;
\item[(iii)] $|\nabla u|$ is constant along each connected components of $\partial\Sigma$. 
\end{itemize}
\end{proposition}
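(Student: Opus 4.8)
The plan is to mimic the classical argument for static potentials, transcribing it into the weighted setting. The starting point is the defining equation \eqref{perelmansing}, namely $-(\Delta_f u)\,g + \nabla^2 u - u\,\ric_f = 0$. On the boundary $\partial\Sigma = u^{-1}(0)$, the term $u\,\ric_f$ vanishes, so along $\partial\Sigma$ we get the identity $\nabla^2 u = (\Delta_f u)\, g$. First I would establish item (i): if $p\in\partial\Sigma$ were a critical point of $u$, then at $p$ both $u=0$ and $\nabla u = 0$; feeding $\nabla u(p)=0$ back into the boundary identity gives $\nabla^2 u(p) = (\Delta_f u)(p)\,g$, and tracing (recalling $\Delta_f u = \Delta u - \langle\nabla f,\nabla u\rangle$, which at $p$ equals $\Delta u(p)$ since $\nabla u(p)=0$) forces either the full Hessian to be a multiple of $g$ at $p$ or to vanish. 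To rule out a genuine interior-type degenerate zero I would invoke the standard unique-continuation / ODE argument along geodesics emanating from $p$: the function $u$ and its first derivatives vanish at $p$, and \eqref{perelmansing} is a second-order equation expressing $\nabla^2 u$ linearly in terms of $u$ and $\Delta_f u$ (itself recoverable by tracing as $\Delta_f u = -\tfrac{1}{n-1}u\,\R$ from the trace of \eqref{perelmansing}); along a unit-speed geodesic $\gamma$ this yields a linear second-order ODE for $\varphi(t)=u(\gamma(t))$ with $\varphi(0)=\varphi'(0)=0$, hence $\varphi\equiv 0$, so $u$ vanishes on a neighborhood and then on all of $\Sigma$ by connectedness — contradicting $u>0$ in the interior. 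Therefore $\nabla u \ne 0$ on $\partial\Sigma$ and $0$ is a regular value.

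For item (ii), having established that $\partial\Sigma$ is a regular level set, the unit normal is $\nu = \nabla u / |\nabla u|$. The second fundamental form of $\partial\Sigma$ with respect to $\nu$ is $\mathrm{II}(X,Y) = \tfrac{1}{|\nabla u|}\nabla^2 u(X,Y)$ for $X,Y$ tangent to $\partial\Sigma$. But on $\partial\Sigma$ we showed $\nabla^2 u = (\Delta_f u)\,g$, so $\mathrm{II}(X,Y) = \tfrac{\Delta_f u}{|\nabla u|}\langle X,Y\rangle$, i.e. $\partial\Sigma$ is totally umbilic with $\mathrm{II} = \tfrac{\Delta_f u}{|\nabla u|}\,g|_{\partial\Sigma}$. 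To upgrade umbilicity to total geodesy I need the umbilicity factor to vanish, i.e. $\Delta_f u = 0$ on $\partial\Sigma$; this follows from the trace relation $\Delta_f u = -\tfrac{1}{n-1}u\,\R$ derived from tracing \eqref{perelmansing} (the trace of $\nabla^2 u$ is $\Delta u$, the trace of $-(\Delta_f u)g$ is $-n\Delta_f u$, and $\Delta u = \Delta_f u + \langle\nabla f,\nabla u\rangle$, giving $\Delta_f u(1-n) - u\,\R = 0$ after rearranging), so that $u=0$ on $\partial\Sigma$ forces $\Delta_f u = 0$ there. Hence $\mathrm{II}\equiv 0$ on $\partial\Sigma$ and the boundary is totally geodesic.

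For item (iii), I would show $\nu(|\nabla u|^2)$ restricted to a connected component of $\partial\Sigma$ is governed in a way that makes $|\nabla u|$ locally constant — but the cleaner route is to compute the tangential derivative. For $X$ tangent to $\partial\Sigma$, we have $X(|\nabla u|^2) = 2\nabla^2 u(\nabla u, X) = 2\nabla^2 u(|\nabla u|\nu, X) = 2|\nabla u|\,\nabla^2 u(\nu, X)$. Now on $\partial\Sigma$ we use the full restricted equation: since $u=0$ there, $\nabla^2 u = (\Delta_f u)\,g = 0$ on $\partial\Sigma$ as a bilinear form on $T\Sigma|_{\partial\Sigma}$ — wait, the identity $\nabla^2 u = (\Delta_f u)\,g$ holds as tensors on all of $T_p\Sigma$ for $p\in\partial\Sigma$, and we just showed $\Delta_f u = 0$ on $\partial\Sigma$, so $\nabla^2 u(\nu, X) = 0$ for every $X$, in particular for $X$ tangent to $\partial\Sigma$. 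Therefore $X(|\nabla u|^2) = 0$ for all $X$ tangent to $\partial\Sigma$, which means $|\nabla u|$ is constant on each connected component. \textbf{The main obstacle} I anticipate is the unique-continuation step in item (i): one must be careful that the equation genuinely closes up into a determined system for $u$ (so that vanishing of $u$ and $\nabla u$ at a point propagates), and that $\R$ — which appears after tracing — is controlled enough (smooth, which it is) for the ODE-along-geodesics argument or an Aronszajn-type unique continuation theorem to apply; everything else is a direct translation of the static case using that $u\,\ric_f$ kills the curvature term on the boundary and that tracing \eqref{perelmansing} yields $\Delta_f u = -\tfrac{1}{n-1}u\R$.
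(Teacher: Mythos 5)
Your overall strategy (restrict the equation to $u^{-1}(0)$, get umbilicity, kill the umbilicity factor, and use unique continuation for the regular-value claim) is the right one --- it is precisely the static-metric argument of Corvino, which is all the paper itself invokes for this proposition. But the identity you hang everything on is false: tracing \eqref{perelmansing} does \emph{not} give $\Delta_f u=-\tfrac{1}{n-1}u\,\R$. The trace of $\ric_f=\ric+\nabla^2 f$ is $R+\Delta f$, which is not the Perelman scalar curvature $\R=R+2\Delta f-|\nabla f|^2$ (the paper stresses exactly this point), and after converting $\Delta u=\Delta_f u+\langle\nabla f,\nabla u\rangle$ a first-order term survives; the correct trace is $(n-1)\Delta_f u=\langle\nabla f,\nabla u\rangle-u\,(R+\Delta f)$. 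Consequently, on $\partial\Sigma=u^{-1}(0)$ your computation only yields $(n-1)\Delta_f u=\langle\nabla f,\nabla u\rangle=-|\nabla u|\,\partial_\nu f$, which is not obviously zero, so as written you prove that the boundary is umbilic but you do not obtain (ii) or (iii). The missing ingredient is Corollary \ref{Rf}: $\tfrac12\,u\,d\R=\Delta_f u\,df$, which on $u^{-1}(0)$ forces $(\Delta_f u)\,df=0$ pointwise; at a boundary point either $df\neq 0$, whence $\Delta_f u=0$ directly, or $df=0$, and then the (correct) trace gives $\Delta_f u=0$ as well. Hence $\Delta_f u=0$ and $\nabla^2u=(\Delta_f u)\,g+u\,\ric_f=0$ along $\partial\Sigma$, after which your arguments for (ii) and (iii) go through verbatim.

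The same false identity also undermines the unique-continuation step in (i): with the correct trace, $\Delta_f u$ involves the full gradient through $\langle\nabla f,\nabla u\rangle$, so restricting to a single geodesic does not produce a closed scalar ODE for $\varphi(t)=u(\gamma(t))$. The standard repair (as in Fischer--Marsden and Corvino) is to work with the pair $(u,\nabla u)$ along a geodesic: the equation gives $\nabla_{\gamma'}\nabla u=(\Delta_f u)\,\gamma'+u\,\ric_f(\gamma',\cdot)^{\sharp}$, with $\Delta_f u$ expressed linearly in $u$ and $\nabla u$ via the trace, so $(u,\nabla u)$ satisfies a linear first-order ODE system with smooth coefficients; vanishing of $u$ and $\nabla u$ at $p$ then propagates along geodesics and contradicts $u>0$ in the interior. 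With these two corrections your proof is sound; for comparison, the paper gives no computation at all and simply refers to Propositions 2.3 and 2.6 of \cite{corvino}.
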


\begin{proof}
The proof follows the steps outlined in the demonstrations of Propositions 2.3 and 2.6 in \cite{corvino}.
\end{proof}

Next, we establish a relationship between the boundary area and an integral involving the Perelman scalar curvature and the potential $u$.

\begin{proposition}
Let $(\Sigma, g, d\Vol_{f}, u)$ be a compact P-singular manifold such that $u>0$ on the interior of $\Sigma$, and $\partial\Sigma=u^{-1}(0)=\cup_{\alpha}\Gamma_\alpha$, where $\Gamma_{\alpha}$ are connected components. Then,
$$-(n-1)\sum_{\alpha}\kappa_{\alpha}\sigma_{f}(\Gamma_{\alpha})=\int_{\Sigma}\R u \, d\Vol_f,$$
where $\kappa_{\alpha}$ is the constant $|\nabla u|$  along the  component $\Gamma_{\alpha}$ of the boundary and  $\sigma_{f}(\Gamma_{\alpha})$ denotes the weighted area of the component $\Gamma_{\alpha}.$
\end{proposition}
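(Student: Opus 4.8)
The plan is to integrate the trace of the singular equation $(\delta\mathcal{R}_f(g))^*u = -(\Delta_f u)g + \nabla^2 u - u\,\ric_f = 0$ against the weighted measure, using the boundary structure established in the previous propositions. First I would take the trace of $(\delta\mathcal{R}_f(g))^*u=0$: since $\tr g = n$ and $\tr\nabla^2 u = \Delta u$, while $\tr\ric_f = R + \Delta f = \mathcal{R}_f - \Delta f + |\nabla f|^2$... actually it is cleaner to write $\tr\ric_f$ in terms that combine with $\Delta_f u$. A direct computation gives $-(n-1)\Delta_f u - u\,\mathcal{R}_f = $ (a term coming from the difference between $\Delta u$ and $\Delta_f u$ and between $\tr\ric_f$ and $\mathcal{R}_f$), but in fact the Proposition just before Corollary~\ref{Rf}'s block already records the traced identity in the form $-(n-1)\Delta_f u - u\mathcal{R}_f = ue^f(\Delta e^{-f} + \langle \nabla\ln u,\nabla e^{-f}\rangle)$ when $u>0$. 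I would instead use the raw traced equation $-(n-1)\Delta_f u - u\,\mathcal{R}_f - u\,\Delta_f f = 0$ obtained from $\tr\ric_f = \mathcal{R}_f + \Delta_f f$ together with $\Delta u = \Delta_f u + \langle\nabla f,\nabla u\rangle$; rearranging one gets a clean relation. The key point is to isolate $\int_\Sigma \mathcal{R}_f\,u\,d\Vol_f$ and show that the remaining terms integrate, via the weighted divergence theorem, to the boundary sum $-(n-1)\sum_\alpha \kappa_\alpha\,\sigma_f(\Gamma_\alpha)$.

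Concretely, the second step is to integrate $-(n-1)\Delta_f u = u\,\mathcal{R}_f + (\text{lower order})$ over $\Sigma$ with respect to $d\Vol_f$. By the weighted Stokes theorem, $\int_\Sigma \Delta_f u\,d\Vol_f = \int_{\partial\Sigma}\langle\nabla u,\nu\rangle\,d\sigma_f$, where $\nu$ is the outward unit normal. Since $u>0$ in the interior and $u=0$ on $\partial\Sigma$ with $0$ a regular value (item (i) of the previous proposition), $\nabla u$ is a nonzero inward-pointing normal along $\partial\Sigma$, so $\nabla u = -|\nabla u|\,\nu = -\kappa_\alpha\,\nu$ on $\Gamma_\alpha$, giving $\langle\nabla u,\nu\rangle = -\kappa_\alpha$ there. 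Hence $\int_{\partial\Sigma}\langle\nabla u,\nu\rangle\,d\sigma_f = -\sum_\alpha \kappa_\alpha\,\sigma_f(\Gamma_\alpha)$, where $\kappa_\alpha$ is constant on each component by item (iii). Multiplying by $-(n-1)$ produces exactly the left-hand side of the claimed identity, provided the extra lower-order terms in the traced equation either vanish or are absorbed correctly.

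The third step — and the one requiring care — is to verify that no spurious bulk term survives. Here I would lean on Corollary~\ref{cor2.1}: there is a smooth $\sigma$ with $d\mathcal{R}_f = -2\sigma\,df$ and $\Delta_f u = -\sigma u$, and on Corollary~\ref{Rf}: $\tfrac12 u\,d\mathcal{R}_f = \Delta_f u\,df$. These let me rewrite any leftover $u\,\Delta_f f$ or $\langle\nabla f,\nabla u\rangle$ contributions in terms of $\Delta_f u$ and $\mathcal{R}_f$, so that the traced equation collapses to precisely $-(n-1)\Delta_f u = \mathcal{R}_f\,u$ after using $\Delta_f f = \mathcal{R}_f - \tr\ric_f$... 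I expect the bookkeeping to reduce, after substituting $\tr\ric_f$ from the equation and $\Delta_f u = -\sigma u$, to the identity $-(n-1)\Delta_f u = u\,\mathcal{R}_f + u\,\Delta_f f$ with the $u\,\Delta_f f$ term integrating to zero because $\int_\Sigma u\,\Delta_f f\,d\Vol_f = \int_\Sigma \langle\nabla u,\nabla f\rangle\,d\Vol_f$ by Stokes (the boundary term vanishes as $u|_{\partial\Sigma}=0$), and this in turn equals $-\int_\Sigma f\,\Delta_f u\,d\Vol_f = \int_\Sigma f\,\sigma u\,d\Vol_f$, which need not vanish — so more likely the clean route is to observe that the traced singular equation \emph{directly} reads $-(n-1)\Delta_f u - u\,\mathcal{R}_f = u\,\Delta_f f$ is \emph{not} what one wants, and instead one should note $\tr(\nabla^2 u - u\,\ric_f) = \Delta u - u\,\mathcal{R}_f - u\,\Delta f$ while $\tr((\Delta_f u)g) = n\Delta_f u$, so $n\Delta_f u = \Delta u - u\mathcal{R}_f - u\Delta f$, i.e. $(n-1)\Delta_f u = -\langle\nabla f,\nabla u\rangle - u\mathcal{R}_f - u\Delta f + \ldots$; then $\int_\Sigma[(n-1)\Delta_f u + u\mathcal{R}_f]\,d\Vol_f = -\int_\Sigma \di_f(u\nabla f)\,d\Vol_f = -\int_{\partial\Sigma} u\langle\nabla f,\nu\rangle\,d\sigma_f = 0$. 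The main obstacle is exactly this algebraic reconciliation: matching $e^f\di(e^{-f}\cdot)$ identities so that every bulk term beyond $u\mathcal{R}_f$ and $(n-1)\Delta_f u$ organizes into a weighted divergence whose boundary integral vanishes because $u=0$ on $\partial\Sigma$. Once that is confirmed, integrating and applying the boundary description finishes the proof.
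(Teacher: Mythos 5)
Your strategy is the same as the paper's: trace the equation $-(\Delta_f u)g+\nabla^2u-u\ric_f=0$, reorganize the traced identity into a weighted divergence plus the term $\R\,u$, integrate against $d\Vol_f$, and convert the boundary term using $u|_{\partial\Sigma}=0$, $\nu=-\nabla u/|\nabla u|$, and the constancy of $|\nabla u|=\kappa_\alpha$ on each $\Gamma_\alpha$. The problem is that your third step, which you yourself identify as the crux, is left unresolved and contains algebra slips that matter. Concretely, $\tr\ric_f=R+\Delta f=\R-\Delta_f f$ (not $\R+\Delta_f f$, and your line ``$\tr(\nabla^2u-u\ric_f)=\Delta u-u\R-u\Delta f$'' conflates $R$ with $\R$). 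Carrying out the trace correctly gives the pointwise identity $(n-1)\Delta_f u+\R\,u={\rm div}_f(u\nabla f)$, which is exactly the paper's first display ${\rm div}_f\!\left(\nabla u-\frac{u}{n-1}\nabla f\right)+\frac{\R}{n-1}u=0$. Integrating and using $u=0$ on $\partial\Sigma$ then gives $\int_\Sigma\left[(n-1)\Delta_f u+\R\,u\right]d\Vol_f=0$, which is where your Step 3 ends up (your intermediate sign on ${\rm div}_f(u\nabla f)$ is wrong, but that term integrates to zero anyway). Combined with your own Step 2 computation $\int_\Sigma\Delta_f u\,d\Vol_f=-\sum_\alpha\kappa_\alpha\sigma_f(\Gamma_\alpha)$, this yields $(n-1)\sum_\alpha\kappa_\alpha\sigma_f(\Gamma_\alpha)=\int_\Sigma\R\,u\,d\Vol_f$, i.e.\ the claimed identity with the \emph{opposite} sign on the left. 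Yet in Step 2 you assert that the boundary term ``produces exactly the left-hand side of the claimed identity.'' These two steps cannot both be right, and you never notice or adjudicate the conflict; as written, the proposal therefore does not establish the stated equality.

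For what it is worth, the sign your corrected computation produces is the one consistent with the classical unweighted case: for $f$ constant the trace of the static equation gives $(n-1)\Delta u=-Ru$, hence $(n-1)\sum_\alpha\kappa_\alpha|\Gamma_\alpha|=\int_\Sigma Ru$, as one checks on the hemisphere with $u$ the height function. The paper's own proof passes from its (correct) first display to ``$(n-1)\Delta_fu-{\rm div}_f(u\nabla f)=\R u$'' with a sign slip, and that slip is what produces the minus sign in the statement. So a complete write-up must either carry the algebra through carefully and flag this discrepancy with the statement, or exhibit an error in the trace identity; gesturing that ``the bookkeeping reduces'' to the desired form, as your Step 3 does, is precisely the gap.
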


\begin{proof}
Tracing $(\delta\mathcal{R}_f)^*u$ and making a direct computation we get 

$$\di_f\left(\nabla u - \frac{u}{n-1}\nabla f\right) +\frac{\mathcal{R}_f}{n-1}u=0.$$

Taking into account this, we have that
$$(n-1)\Delta_fu-{\rm{div}}_f(u\nabla f)=\R u.$$

Thus, applying the Stokes' theorem, we conclude that
\begin{equation}\label{inte}
(n-1)\int_{\partial\Sigma}\langle\nabla u,\nu\rangle\, d\sigma_f=\int_{\Sigma}\R u\, d\Vol_f.
\end{equation}

Follows from the previous proposition that $|\nabla u|$ is constant along of the boundary. Finally, plugging $\nu=-\frac{\nabla u}{|\nabla u|}$ into \eqref{inte} we conclude the desired result.
\end{proof}

With this in mind, we can obtain some non-existence results in this setting:

\begin{corollary}
There is no compact P-singular manifold $(\Sigma, g, d\Vol_{f}, u)$ such that $u>0$, on the interior of $\Sigma$, $\partial\Sigma=u^{-1}(0)$ and $\R\geq 0$.
\end{corollary}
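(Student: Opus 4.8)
The plan is to derive a contradiction from the integral identity established in the preceding proposition. Suppose, for contradiction, that such a compact $P$-singular manifold $(\Sigma, g, d\Vol_f, u)$ exists with $u > 0$ in the interior, $\partial\Sigma = u^{-1}(0)$, and $\R \geq 0$ everywhere. The previous proposition gives
\begin{equation*}
-(n-1)\sum_{\alpha}\kappa_{\alpha}\sigma_f(\Gamma_{\alpha}) = \int_{\Sigma}\R\, u\, d\Vol_f,
\end{equation*}
where $\kappa_{\alpha} = |\nabla u|$ along $\Gamma_\alpha$. The right-hand side is nonnegative since $\R \geq 0$ and $u \geq 0$ on $\Sigma$. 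On the other hand, each $\kappa_\alpha \geq 0$ and each weighted area $\sigma_f(\Gamma_\alpha) > 0$, so the left-hand side is nonpositive. Hence both sides vanish: $\int_\Sigma \R\, u\, d\Vol_f = 0$ and $\kappa_\alpha = 0$ for every $\alpha$.

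The crux is then to rule out the degenerate equality case. Since $u > 0$ on the interior and $\R \geq 0$, the vanishing of $\int_\Sigma \R\, u\, d\Vol_f$ forces $\R \equiv 0$ on $\Sigma$. Meanwhile $\kappa_\alpha = |\nabla u| = 0$ along each boundary component contradicts item (i) of the proposition just before the area identity, which asserts that $0$ is a regular value of $u$ — equivalently, $\nabla u \neq 0$ on $\partial\Sigma$. This is the immediate contradiction, and it already closes the argument. (One could alternatively argue from $\R \equiv 0$ together with the identity $(n-1)\Delta_f u - \di_f(u\nabla f) = \R u = 0$ from the previous proof, but invoking regularity of the value $0$ is cleaner.)

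The main obstacle is making sure that the boundary is genuinely present and nontrivial, so that the left-hand side of the identity is not vacuous: one needs $\partial\Sigma \neq \emptyset$ (built into the hypothesis $\partial\Sigma = u^{-1}(0)$ with $u$ nonconstant, since $u > 0$ in the interior and $u$ vanishes somewhere) and $\sigma_f(\Gamma_\alpha) > 0$ for each component, which holds because $e^{-f} > 0$ and each $\Gamma_\alpha$ is a genuine hypersurface by regularity of the value $0$. Given these, the sign analysis is immediate and the contradiction with $\nabla u \neq 0$ on the boundary finishes the proof. I would write this out in three or four lines, citing the area identity and the regularity statement from the two preceding propositions.
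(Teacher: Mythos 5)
Your proposal is correct and follows essentially the route the paper intends: the corollary is stated as an immediate consequence of the preceding area identity, with the left-hand side nonpositive, the right-hand side nonnegative under $\mathcal{R}_f\geq 0$, and the resulting vanishing of the surface gravities $\kappa_\alpha$ contradicting the fact that $0$ is a regular value of $u$ (so $\nabla u\neq 0$ on the non-empty boundary). Your handling of the equality case and of the positivity of the weighted boundary areas is exactly the right bookkeeping, so nothing is missing.
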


\begin{proposition}
There is no $P$-singular manifold with non-empty boundary and constant $P$-scalar curvature.
\end{proposition}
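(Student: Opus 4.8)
The plan is to argue by contradiction: suppose $(\Sigma,g,d\Vol_f,u)$ satisfies the standing hypotheses of this section and $\mathcal{R}_f\equiv c$ is constant. The first step is to extract the scalar consequence of the structure equation \eqref{perelmansing}. Taking the $g$-trace of $-(\Delta_fu)g+\nabla^2u-u\ric_f=0$, and substituting $\tr\ric_f=R+\Delta f$, $\Delta u=\Delta_fu+\langle\nabla f,\nabla u\rangle$, and $R=\mathcal{R}_f-2\Delta f+|\nabla f|^2$, one obtains
$$(n-1)\Delta_fu=\langle\nabla f,\nabla u\rangle-cu+u\Delta_ff .$$

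The second step uses that $\mathcal{R}_f$ is constant through Corollary~\ref{Rf}, which gives $(\Delta_fu)\,df\equiv 0$ on $\Sigma$; in particular $(\Delta_fu)\langle\nabla f,\nabla u\rangle\equiv 0$. This also says that the open set $\{\Delta_fu\neq 0\}$ is contained in the critical set $\{\nabla f=0\}$ of $f$, hence contained in its interior, on which $f$ is locally constant; there $\nabla^2f$ vanishes and so does $\Delta_ff$, so that $u(\Delta_fu)(\Delta_ff)\equiv 0$ throughout $\Sigma$. Multiplying the displayed identity by $\Delta_fu$ and using these two cancellations leaves the pointwise relation $(n-1)(\Delta_fu)^2+c\,u\,\Delta_fu=0$.

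The last step is an integration. Since $u\equiv 0$ on $\partial\Sigma$, integration by parts with respect to $d\Vol_f$ gives $\int_\Sigma u\,\Delta_fu\,d\Vol_f=-\int_\Sigma|\nabla u|^2\,d\Vol_f$, hence
$$(n-1)\int_\Sigma(\Delta_fu)^2\,d\Vol_f=c\int_\Sigma|\nabla u|^2\,d\Vol_f .$$
Because $u$ is nonconstant (it vanishes on the nonempty boundary and is positive inside), the factor $\int_\Sigma|\nabla u|^2\,d\Vol_f$ is strictly positive while the left-hand side is nonnegative, forcing $c\geq 0$. Then $\mathcal{R}_f\equiv c\geq 0$, which contradicts the previous Corollary ruling out compact $P$-singular manifolds with $\partial\Sigma=u^{-1}(0)$, $u>0$ in the interior, and $\mathcal{R}_f\geq 0$. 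I expect the only delicate point to be the vanishing of $\Delta_ff$ on $\{\Delta_fu\neq 0\}$: it hinges on the observation that, by Corollary~\ref{Rf}, that set is an open subset of the critical set of $f$, so $f$ is in fact locally constant there; granting this, the rest is routine computation.
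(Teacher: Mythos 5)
Your proof is correct, but it follows a genuinely different route from the paper's. The paper's argument is two lines: since $\mathcal{R}_f$ is constant, Corollary \ref{Rf} gives $\Delta_f u\, df=0$, from which the authors conclude $\Delta_f u=0$ and then kill $u$ by the maximum principle together with $u|_{\partial\Sigma}=0$. You instead keep the degenerate locus in play: from the trace of \eqref{perelmansing} you derive $(n-1)\Delta_f u=\langle\nabla f,\nabla u\rangle-cu+u\Delta_f f$, use $(\Delta_f u)\,df\equiv 0$ to see that $\{\Delta_f u\neq 0\}$ is an open subset of $\{\nabla f=0\}$ (so $\Delta_f f$ and $\langle\nabla f,\nabla u\rangle$ vanish there), obtain the pointwise identity $(n-1)(\Delta_f u)^2+c\,u\,\Delta_f u=0$, integrate by parts to get $(n-1)\int_\Sigma(\Delta_f u)^2\,d\Vol_f=c\int_\Sigma|\nabla u|^2\,d\Vol_f$, conclude $c\geq 0$, and then invoke the earlier corollary excluding compact $P$-singular manifolds with $\mathcal{R}_f\geq 0$. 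What your approach buys is precisely the point where the paper is quickest: passing from $\Delta_f u\,df=0$ to $\Delta_f u=0$ pointwise requires $df\neq 0$ on a dense set, which is not guaranteed by the standing hypothesis that $f$ is merely non-constant; your treatment of the set $\{\nabla f=0\}$ makes this step unnecessary. What it costs is that you lean on compactness, on the positivity of $u$ in the interior (both standing hypotheses of the section, so this is legitimate), and on the boundary-area corollary, whereas the paper's argument, granting its $\Delta_f u=0$ step, needs only the maximum principle and $u|_{\partial\Sigma}=0$ and would work without the sign assumption on $u$.
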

\begin{proof}
Let $u$ be a function in the Kernel of $\delta\mathcal{R}_f^*$. Since $\mathcal{R}_f$ is constant, by Corollary \ref{Rf}, we have $\Delta_fu=0$. So, by the maximum principle,  $u$ vanishes.
\end{proof}

\begin{proposition}\label{no existence bdry}
There is no P-singular manifold with  non-empty boundary satisfying $\ric_f = \lambda g.$ 
\end{proposition}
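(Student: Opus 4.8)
The plan is to argue by contradiction. Assume $(\Sigma,g,d\Vol_f,u)$ is a $P$-singular manifold as in the standing hypotheses of this section, so $\Sigma$ is compact and connected, $\partial\Sigma=u^{-1}(0)\neq\emptyset$, $u>0$ in the interior, $f$ is non-constant, and in addition $\ric_f=\lambda g$ for a constant $\lambda$. Since $\ric_f=\lambda g$, the equation $(\delta\R(g))^{\ast}u=0$ reduces to $\nabla^2u=(\Delta_f u+\lambda u)\,g$; put $\phi:=\Delta_f u+\lambda u$, so that $\nabla^2u=\phi\,g$ and, taking traces, $\Delta u=n\phi$. First I would record that, because $\lambda$ is constant, Proposition \ref{for1}(i) gives $\tfrac12 d\R=\di_f(\ric_f)=\di_f(\lambda g)=-\lambda\,df$, hence Corollary \ref{Rf} yields $\Delta_f u\,df=\tfrac12 u\,d\R=-\lambda u\,df$, that is,
\[
\phi\,df=0\qquad\text{everywhere on }\Sigma.
\]

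Next I would extract a pointwise algebraic relation for $\phi$ by computing $\Delta_f|\nabla u|^2$ in two ways. On the one hand $\nabla|\nabla u|^2=2\nabla^2u(\nabla u,\cdot)=2\phi\,\nabla u$, so $\tfrac12\Delta_f|\nabla u|^2=\phi\,\Delta_f u+\langle\nabla\phi,\nabla u\rangle$. On the other hand, the weighted Bochner formula (immediate from the classical one together with $\ric_f=\ric+\nabla^2f$),
\[
\tfrac12\Delta_f|\nabla u|^2=|\nabla^2u|^2+\langle\nabla u,\nabla(\Delta_f u)\rangle+\ric_f(\nabla u,\nabla u),
\]
combined with $|\nabla^2u|^2=n\phi^2$, $\ric_f(\nabla u,\nabla u)=\lambda|\nabla u|^2$ and $\Delta_f u=\phi-\lambda u$, gives $\tfrac12\Delta_f|\nabla u|^2=n\phi^2+\langle\nabla\phi,\nabla u\rangle$. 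Comparing the two expressions, the first-order term cancels and, substituting $\Delta_f u=\phi-\lambda u$ once more, one is left with
\[
\phi\bigl((n-1)\phi+\lambda u\bigr)=0\qquad\text{everywhere on }\Sigma.
\]

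To conclude, I would analyze this identity on the interior of $\Sigma$, which is connected. If $\phi$ vanishes identically on the interior, then $\Delta u=n\phi=0$ there, and since $u$ is continuous on $\Sigma$ with $u|_{\partial\Sigma}=0$ the maximum principle forces the nontrivial $u$ to vanish, a contradiction; this in particular covers $\lambda=0$, for which the last identity reads $(n-1)\phi^2=0$. Hence $\lambda\neq0$ and $\phi$ is not identically zero on the interior, so $W:=\{\phi\neq0\}$, an open subset of the interior, is nonempty. On $W$ the identity forces $\phi=-\tfrac{\lambda}{n-1}u$, and by continuity this persists on $\overline W$; since $\lambda\neq0$ and $u>0$ in the interior, $\phi\neq0$ at every interior point of $\overline W$, so $W$ is also closed in the interior. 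By connectedness $W$ is the whole interior, i.e.\ $\phi$ never vanishes there; then $\phi\,df=0$ forces $df\equiv0$ on the interior, whence $f$ is constant there and, by density and continuity, on $\Sigma$. This contradicts the standing hypothesis that $f$ is non-constant, so no such manifold exists.

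I expect the main obstacle to be securing the clean relation $\phi\bigl((n-1)\phi+\lambda u\bigr)=0$: the weighted Bochner identity is precisely what makes $|\nabla^2u|^2=n\phi^2$ and $\ric_f(\nabla u,\nabla u)=\lambda|\nabla u|^2$ interact so that the gradient terms disappear and one is left with a purely algebraic constraint between $\phi$ and $u$. Once this and $\phi\,df=0$ are in hand, the dichotomy on the connected interior (either $\phi\equiv0$, settled by the maximum principle, or $\phi$ nowhere zero there, settled by $\phi\,df=0$) is routine; one only needs to observe in passing that on $\partial\Sigma$ the condition $u=0$ forces $\phi=0$, consistently with $\phi=-\tfrac{\lambda}{n-1}u$.
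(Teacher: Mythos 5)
Your proof is correct, but it follows a genuinely different route from the paper's. The paper's argument is much shorter: from $\ric_f=\lambda g$ it gets $\tfrac12 d\R=-\lambda\,df$ and then, invoking Corollary \ref{Rf}, passes directly to the pointwise equation $\Delta_f u+\lambda u=0$; plugging this back into $(\delta\R)^*u=0$ collapses the kernel equation to $\nabla^2u=0$ in $\Sigma$ with $u=0$ on $\partial\Sigma$, which forces $u\equiv0$. The step from $(\Delta_fu+\lambda u)\,df=0$ to $\Delta_fu+\lambda u=0$ implicitly divides by $df$, i.e.\ uses that $df$ does not vanish (at least on a dense set) — precisely the point your argument treats carefully. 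You keep only the honest relation $\phi\,df=0$ with $\phi=\Delta_fu+\lambda u$, extract the algebraic identity $\phi\bigl((n-1)\phi+\lambda u\bigr)=0$ from the weighted Bochner formula applied to $\nabla^2u=\phi\,g$ (your computation checks out), and then close the argument with an open--closed dichotomy on the connected interior: either $\phi\equiv0$, in which case $u$ is harmonic with zero boundary values and must vanish, or $\phi$ is nowhere zero, in which case $\phi\,df=0$ forces $f$ constant, contradicting the section's standing hypothesis. This is where your proof and the paper's diverge most: the paper never visibly uses the non-constancy of $f$, while your proof does — and rightly so, since for constant $f$ the statement fails (the round hemisphere with $u$ a height function satisfies $\ric=(n-1)g$ and the static equation), so the hypothesis is essential. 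The only cosmetic quibbles: $W=\{\phi\neq0\}$ should be taken intersected with the interior (which is clearly what you mean), and the parenthetical remark that ``$u=0$ on $\partial\Sigma$ forces $\phi=0$'' is not literally a consequence of $u=0$ alone, but it is not load-bearing. In sum, your argument is longer but more robust at the one delicate step that the paper glosses over, at the cost of invoking the weighted Bochner formula and connectedness.
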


\begin{proof}
We already know that $\Ric_f=\lambda g$ implies $\mathcal{R}_f = c_0-2\lambda f$. So, taking $u$ a smooth function in the kernel of $(\delta\mathcal{R}_f)^*$ and  applying Corollary \ref{Rf} we get $-\frac{u}{2}\cdot 2\lambda df = \Delta_fu df, $ which implies $\Delta_fu +\lambda u=0$. Plugging the function $u$ in the expression of $(\delta\mathcal{R}_f)^*$ and using the hypotheses we conclude that
$$\left\{ \begin{array}{lcl}
\nabla^2u=0& {\rm in} & \Sigma,    \\\medskip
u=0 & {\rm on} & \partial\Sigma,
\end{array}\right.$$
and thus we conclude that $u$ vanishes on $\Sigma$. 
\end{proof}

Now we introduce a useful result known in the literature as the Pohoz$\check{\mbox{a}}$ev-Sch$\ddot{\mbox{o}}$en identity which will play an important role in the next result: 
\begin{proposition} Let $\Sigma$ be an oriented compact smooth metric measure space and let $T$ be a $(0,2)$-symmetric tensor on $\Sigma$. Then, for all vector fields $X,$
\begin{equation}\label{PS}\int_{\partial\Sigma} T(X,\nu)\, d\sigma_f = \frac{1}{2}\int_\Sigma\langle T, \mathcal{L}_Xg\rangle\, d\Vol_f + \int_\Sigma (\di_f T)(X)\, d\Vol_f, \end{equation}
where $\nu$ is the outward unit normal vector field on $\partial\Sigma$.

\end{proposition}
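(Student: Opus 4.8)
The plan is to prove the Pohoz\v{a}ev-Schoen identity by a direct integration-by-parts argument, mimicking the classical (unweighted) case but systematically replacing $\di$ and $d\Vol$ by $\di_f$ and $d\Vol_f$. First I would recall the algebraic identity that holds pointwise for any $(0,2)$-symmetric tensor $T$ and any vector field $X$: writing $\omega$ for the $1$-form $\omega(\cdot)=T(X,\cdot)$, one has $\di_f\omega = \langle \nabla X, T\rangle + (\di_f T)(X)$, where $\langle \nabla X, T\rangle$ denotes the full contraction of $T$ with the $(0,2)$-tensor $\nabla X$. Since $T$ is symmetric, only the symmetric part of $\nabla X$ survives in this contraction, and $\tfrac12 \mathcal{L}_X g$ is precisely the symmetrization of $\nabla X$; hence $\langle \nabla X, T\rangle = \tfrac12\langle T, \mathcal{L}_X g\rangle$. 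This gives the pointwise identity
$$\di_f\big(T(X,\cdot)^{\sharp}\big) = \tfrac12\langle T,\mathcal{L}_X g\rangle + (\di_f T)(X).$$

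Next I would integrate this identity over $\Sigma$ against the weighted measure $d\Vol_f=e^{-f}d\Vol$ and apply the weighted divergence theorem. The key point is that for a vector field $Y$ one has $\di_f Y = e^{f}\di(e^{-f}Y)$, so $\int_\Sigma \di_f Y\, d\Vol_f = \int_\Sigma \di(e^{-f}Y)\, d\Vol = \int_{\partial\Sigma} e^{-f}\langle Y,\nu\rangle\, d\sigma = \int_{\partial\Sigma}\langle Y,\nu\rangle\, d\sigma_f$, using the definitions $d\Vol_f=e^{-f}d\Vol$ and $d\sigma_f = e^{-f}d\sigma$ from Section \ref{sct2}. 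Applying this with $Y = T(X,\cdot)^{\sharp}$ and noting $\langle Y,\nu\rangle = T(X,\nu)$ yields exactly
$$\int_{\partial\Sigma} T(X,\nu)\, d\sigma_f = \frac12\int_\Sigma \langle T,\mathcal{L}_X g\rangle\, d\Vol_f + \int_\Sigma (\di_f T)(X)\, d\Vol_f,$$
which is the desired identity \eqref{PS}.

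The only genuinely delicate point is the verification of the pointwise divergence identity $\di_f\omega = \langle \nabla X, T\rangle + (\di_f T)(X)$, which I would check in a local orthonormal frame $\{e_i\}$: expanding $\di(T(X,\cdot)) = \sum_i (\nabla_{e_i}T)(X,e_i) + \sum_i T(\nabla_{e_i}X, e_i) = (\di T)(X) + \langle \nabla X, T\rangle$, and then subtracting the $f$-correction term $\omega(\nabla f) = T(X,\nabla f)$ from both sides turns $\di$ into $\di_f$ on the left and $(\di T)(X) - T(X,\nabla f)(\cdot)$ — wait, more precisely $(\di T)(X) - (\di_f T)(X) = T(\nabla f, X)$ by the definition $\di_f T = \di T - T(\nabla f,\cdot)$ — into $(\di_f T)(X)$ on the right, while the term $\langle \nabla X, T\rangle$ is unaffected. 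This is routine but must be done carefully so that the $\nabla f$ terms match up; it is the only real obstacle, and it is a short computation. The orientability and compactness hypotheses are used solely to legitimize the application of Stokes' theorem. I would present the local-frame computation in one or two lines and then conclude.
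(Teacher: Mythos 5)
Your proof is correct and is exactly the argument the paper has in mind: the paper's proof consists of the single remark that the identity ``follows immediately from integration by parts,'' and your pointwise identity $\di_f\bigl(T(X,\cdot)^{\sharp}\bigr)=\tfrac12\langle T,\mathcal{L}_Xg\rangle+(\di_f T)(X)$ together with the weighted divergence theorem (using $d\Vol_f=e^{-f}d\Vol$, $d\sigma_f=e^{-f}d\sigma$) is precisely that integration by parts, carried out with the $\nabla f$ corrections matched correctly via the symmetry of $T$. No gaps; you have simply supplied the details the paper omits.
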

\begin{proof} This formula follows immediately from  integration by parts.
\end{proof}

As an application of this formula we have the next result. We observe that the condition $-\Delta_{f}u=\lambda u$ with $\lambda$ constant, is equivalent to the condition that $\mathcal{R}_f=c_{0}-2\lambda f$ (see Corollary \ref{Rf}).

\begin{theorem}\label{thm1} Let $(\Sigma, g, d\Vol_f, u)$ be a P-singular space with $u$ positive on $\Sigma$ and $\partial\Sigma =\cup_{\alpha}\Gamma_\alpha $. Assuming that $\mathcal{R}_f = c_0+c_1f$ and $\partial_\nu f=0$ in $\partial\Sigma$, where $\nu$ is the outward unit normal vector field, then
$$(c_0+c_1)\sum_\alpha k_\alpha {\sigma}_f(\Gamma_\alpha) < \sum_{\alpha} k_\alpha \int_{\Gamma_\alpha}(\mathcal{R}_f^{\partial\Sigma} -c_1 f) \, d\sigma_f .$$ 

\end{theorem}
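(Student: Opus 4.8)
The plan is to apply the Pohozaev--Schoen identity \eqref{PS} to the symmetric $(0,2)$-tensor $T:=\ric_f+\tfrac{c_1}{2}g$ and the vector field $X:=\nabla u$. Two preliminary facts make this choice effective. First, because $\mathcal{R}_f=c_0+c_1f$, Corollary \ref{cor2.1} together with the observation preceding the statement forces $\Delta_f u=\tfrac{c_1}{2}u$, so the kernel equation $(\delta\mathcal{R}_f)^*u=-(\Delta_f u)g+\nabla^2u-u\,\ric_f=0$ becomes $\nabla^2u=u\,T$ on all of $\Sigma$. Second, $d\mathcal{R}_f=c_1\,df$, so by Proposition \ref{for1}(i) together with $\di_f g=-df$ we get $\di_f T=\tfrac12 d\mathcal{R}_f-\tfrac{c_1}{2}df=0$; that is, $T$ is $f$-divergence free. (In the interior $T=\tfrac1u\nabla^2u$, but since $\ric_f$ and $g$ are smooth on the compact $\Sigma$ there is no regularity issue at $\partial\Sigma$, and \eqref{PS} applies directly.)

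With these facts, \eqref{PS} collapses: the $\di_f T$ term vanishes, $\mathcal{L}_{\nabla u}g=2\nabla^2u$, and $\langle T,\nabla^2u\rangle=\langle T,uT\rangle=u|T|^2$, so
$$\int_{\partial\Sigma}T(\nabla u,\nu)\,d\sigma_f=\int_\Sigma u\,\bigl|\ric_f+\tfrac{c_1}{2}g\bigr|^2\,d\Vol_f\ \ge\ 0 .$$
To evaluate the left-hand side I use the earlier Proposition: $0$ is a regular value of $u$, $|\nabla u|\equiv k_\alpha$ on $\Gamma_\alpha$, and $\partial\Sigma$ is totally geodesic. Since $u>0$ inside and $u=0$ on $\partial\Sigma$ this gives $\nabla u=-k_\alpha\nu$ along $\Gamma_\alpha$, hence $T(\nabla u,\nu)=-k_\alpha\bigl(\ric_f(\nu,\nu)+\tfrac{c_1}{2}\bigr)$. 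Then, using total geodesy and $\partial_\nu f=0$ (so $\nabla^{\partial\Sigma}(f|_{\partial\Sigma})=\nabla f$ and $\Delta^{\partial\Sigma}(f|_{\partial\Sigma})=\Delta f-\nabla^2f(\nu,\nu)$ on $\partial\Sigma$), the Gauss equation $R^{\partial\Sigma}=R-2\ric(\nu,\nu)$ yields the boundary identity $\ric_f(\nu,\nu)=\tfrac12(\mathcal{R}_f-\mathcal{R}_f^{\partial\Sigma})$ on each $\Gamma_\alpha$. Inserting $\mathcal{R}_f=c_0+c_1f$ gives $T(\nabla u,\nu)=\tfrac12 k_\alpha\bigl(\mathcal{R}_f^{\partial\Sigma}-c_1f-c_0-c_1\bigr)$.

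Integrating over $\partial\Sigma$ and combining with the display above produces the exact identity
$$\tfrac12\Bigl(\sum_\alpha k_\alpha\!\int_{\Gamma_\alpha}(\mathcal{R}_f^{\partial\Sigma}-c_1f)\,d\sigma_f-(c_0+c_1)\sum_\alpha k_\alpha\,\sigma_f(\Gamma_\alpha)\Bigr)=\int_\Sigma u\,\bigl|\ric_f+\tfrac{c_1}{2}g\bigr|^2\,d\Vol_f .$$
Since the right-hand side is nonnegative, the asserted inequality follows in its non-strict form. Strictness is obtained by noting that equality would force $\ric_f=-\tfrac{c_1}{2}g$ on the interior (where $u>0$), hence on all of $\Sigma$ by continuity; but then $\ric_f=\lambda g$ with $\lambda=-\tfrac{c_1}{2}$, contradicting Proposition \ref{no existence bdry}. (Alternatively: $T\equiv0$ forces $\nabla^2u\equiv0$, so $u$ would be $f$-harmonic with zero boundary data, whence $u\equiv0$, contradicting $u>0$ in the interior.)

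The step I expect to require the most care is the boundary relation $\ric_f(\nu,\nu)=\tfrac12(\mathcal{R}_f-\mathcal{R}_f^{\partial\Sigma})$: it must be deduced by assembling the Gauss equation for the scalar curvature with the hypersurface decompositions of $\Delta f$ and $\nabla f$, all of which rely on $\partial\Sigma$ being totally geodesic and on $\partial_\nu f=0$. Everything else is essentially formal once $\nabla^2u=uT$ and $\di_f T=0$ are established.
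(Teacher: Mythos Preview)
Your proof is correct and follows essentially the same route as the paper: apply the Pohozaev--Schoen identity with $T=\ric_f+\tfrac{c_1}{2}g$ and $X=\nabla u$, use the kernel equation together with $\Delta_f u=\tfrac{c_1}{2}u$ to rewrite $\nabla^2u=uT$, invoke the Gauss-type boundary identity $\ric_f(\nu,\nu)=\tfrac12(\mathcal{R}_f-\mathcal{R}_f^{\partial\Sigma})$, and obtain strictness from Proposition~\ref{no existence bdry}. The only cosmetic difference is that you verify $\di_f T=0$ directly, whereas the paper expresses the divergence term as $\tfrac12\int_\Sigma X(\mathcal{R}_f-c_1f)\,d\Vol_f$ and observes that $\mathcal{R}_f-c_1f$ is constant.
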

\begin{proof} Plugging $T=\ric_f+\frac{c_1}{2}g$ into the equation $\ref{PS}$ and using that $\di_f(\ric_f)=\frac{1}{2}d\mathcal{R}_f$, which follows from Proposition \ref{for1}, we get
$$\hspace{-0.45cm}\int_{\partial\Sigma}\ric_f(X, \nu)+\frac{c_1}{2}\langle X,\nu\rangle\, d\sigma_f= \frac{1}{2}\int_\Sigma\langle\ric_f+\frac{c_1}{2}g , \mathcal{L}_Xg\rangle\, d\Vol_f +\frac{1}{2}\int_\Sigma X(\mathcal{R}_f-c_1 f)\, d\Vol_f,$$
for all vector fields $X$ and $\mathcal{L}_Xg$ denotes the Lie derivative of $g$ with respect to $X$. 

Choosing $X=\nabla u$ and using that $\R-c_1 f$ is constant we have
$$\int_{\partial\Sigma}\ric_f(\nabla u, \nu)+\frac{c_1}{2}\langle \nabla u,\nu\rangle\, d\sigma_f= \frac{1}{2}\int_\Sigma\langle\ric_f+\frac{c_1}{2}g , 2\nabla^2u\rangle\, d\Vol_f.$$
Since $u$ belongs to ${\rm Ker}(\delta\mathcal{R}_f)^*$ and $\nu=-\frac{\nabla u}{|\nabla u|}$, from the hypothesis,  we have
$$-\int_{\partial\Sigma}|\nabla u|(\ric_f(\nu, \nu)+\frac{c_1}{2})\, d\sigma_f= \frac{1}{2}\int_\Sigma u|\ric_f+\frac{c_1}{2}g|^2 d\Vol_f.$$
Using the Gauss equation, after a straightforward computation, we get
$$\ric_f(\nu, \nu) = \frac{1}{2}\mathcal{R}_f - \frac{1}{2}\mathcal{R}_f^{\partial\Sigma} +\frac{1}{2}H_f^2 - \frac{1}{2}|A|^2,$$
where $\mathcal{R}_f^{\partial\Sigma}$ is the Perelman scalar curvature of the boundary. Since $\partial\Sigma$ is totally geodesic and $\partial_\nu f=0$ the equality reduce to $$ \ric_f(\nu, \nu) = \frac{1}{2}(\mathcal{R}_f - \mathcal{R}_f^{\partial\Sigma}),$$ and so
$$\int_{\partial\Sigma}|\nabla u|(\mathcal{R}_f - \mathcal{R}_f^{\partial\Sigma}+c_1)\, d\sigma_f\leq0.$$ Further, by Proposition \ref{no existence bdry}, the equality is not possible. To conclude, we observe that $|\nabla u|$ is constant on each connected component $\Gamma_\alpha$ and its value on it is denoted by $k_\alpha$. Using this and the value of the Perelman scalar curvature we have
$$\sum_\alpha k_\alpha\int_{\Gamma_\alpha} (c_0+c_1f - \mathcal{R}_f^{\partial\Sigma} +c_1)\, d\sigma_f < 0.$$
Manipulating this  inequality we get the desired result.
\end{proof}

Now,  we recall that the traceless tensor associated with a tensor $T$ is defined by

$$\overset\circ {T}=T-\frac{\tr(T)}{n}g.$$
With this notation, from equation \eqref{perelmansing} we can easily verify the following equation
\begin{equation}\label{traceless}
u\overset\circ {\Ric}_f=\overset\circ{\nabla^2}u.
\end{equation}
Next result provides sufficient conditions to  guarantee that a $P$-singular manifold is an almost Ricci soliton.

\begin{proposition}Let $(\Sigma, g, d\Vol_f, u)$ be a compact P-singular manifold with non-empty boundary.  Let $X= \frac{\R}{2}-\frac{e^{f}}{n}\nabla (e^{-f}(R+\Delta f))$ be a vector field along $\Sigma$ and assume that $\langle \nabla u, X\rangle\geq 0$.  If  $\overset\circ{\Ric_f}(\nabla u,\nabla u)\geq 0$ along of the boundary,  then $\Sigma$ is an almost Ricci soliton.

\end{proposition}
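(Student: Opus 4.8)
The plan is to use the traceless identity \eqref{traceless}, which reads $u\overset\circ{\Ric}_f = \overset\circ{\nabla^2}u$, and to show that under the hypotheses the traceless Bakry-\'Emery tensor $\overset\circ{\Ric}_f$ vanishes identically on $\Sigma$; this is precisely the statement that $\Ric_f = \mu g$ for some function $\mu$, i.e. that $(\Sigma, g, f)$ is an almost Ricci soliton. First I would compute $\di_f$ of both sides of \eqref{traceless}. On the left, writing $T = u\overset\circ{\Ric}_f$, one gets $\di_f(u\overset\circ{\Ric}_f) = u\,\di_f(\overset\circ{\Ric}_f) + \overset\circ{\Ric}_f(\nabla u, \cdot)$, and using Proposition \ref{for1}(i) together with $\di_f(\tr(\Ric_f)g/n) = \frac{1}{n}d(\tr\Ric_f) - \frac{1}{n}(\tr\Ric_f)df = \frac{e^f}{n}\nabla(e^{-f}(R+\Delta f))$ (recall $\tr\Ric_f = R + \Delta f$), one identifies $\di_f(\overset\circ{\Ric}_f)$ with exactly the vector field $X = \frac{\nabla \mathcal{R}_f}{2} - \frac{e^f}{n}\nabla(e^{-f}(R+\Delta f))$ appearing in the statement (the $\frac{\mathcal{R}_f}{2}$ in the paper should read $\frac{\nabla\mathcal{R}_f}{2}$). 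So the left-hand divergence becomes $u\,X + \overset\circ{\Ric}_f(\nabla u, \cdot)$.

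Next I would compute $\di_f(\overset\circ{\nabla^2}u)$. Using Proposition \ref{for1}(iii), $\di_f(\nabla^2 u) = d(\Delta_f u) + \Ric_f(\nabla u, \cdot)$, and subtracting $\di_f(\frac{\Delta u}{n}g)$... — more carefully, $\overset\circ{\nabla^2}u = \nabla^2 u - \frac{\Delta u}{n}g$, so $\di_f(\overset\circ{\nabla^2}u) = \di_f(\nabla^2 u) - \frac{1}{n}d(\Delta u) + \frac{1}{n}(\Delta u)df$. Combining, the key identity I expect is of the schematic form $u\,X + \overset\circ{\Ric}_f(\nabla u, \cdot) = \di_f(\overset\circ{\nabla^2}u)$, where the right-hand side can be re-expressed (again via \eqref{traceless} and the Ricci/Bianchi manipulations) in terms of $\overset\circ{\Ric}_f$ and $\nabla u$. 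Then I would contract this identity with $\nabla u$ and integrate over $\Sigma$ against $d\Vol_f$. Integration by parts on the $\di_f(\overset\circ{\nabla^2}u)(\nabla u)$ term — using that $u = 0$ on $\partial\Sigma$ (which kills the boundary term, or leaves one controlled by $\overset\circ{\Ric}_f(\nabla u,\nabla u)$ via \eqref{traceless}) — should produce an integral of $|\overset\circ{\nabla^2}u|^2 = u^2|\overset\circ{\Ric}_f|^2 \geq 0$ with a definite sign, plus the terms $\int u\langle X, \nabla u\rangle\, d\Vol_f \geq 0$ (by hypothesis) and a boundary contribution $\int_{\partial\Sigma} \overset\circ{\nabla^2}u(\nabla u, \nu)\, d\sigma_f$, which by \eqref{traceless} and $\nu = -\nabla u/|\nabla u|$ equals $-\int_{\partial\Sigma} |\nabla u|\,\overset\circ{\Ric}_f(\nabla u, \nabla u)\, d\sigma_f \leq 0$ under the boundary hypothesis $\overset\circ{\Ric}_f(\nabla u, \nabla u) \geq 0$. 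Assembling all signs, I would arrange that the only way the identity can hold is $u^2|\overset\circ{\Ric}_f|^2 \equiv 0$; since $u > 0$ on the interior, this forces $\overset\circ{\Ric}_f \equiv 0$, i.e. $\Ric_f = \mu g$ with $\mu = \frac{1}{n}\tr\Ric_f$, which is the definition of an almost Ricci soliton.

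The main obstacle I anticipate is bookkeeping the signs so that all three contributions (the interior $|\overset\circ{\nabla^2}u|^2$ term, the $\langle X, \nabla u\rangle$ term, and the boundary term) line up on the same side of the resulting inequality — in particular one must be careful that the integration-by-parts constant and the coefficient coming from the $\frac{1}{n}$-trace corrections conspire correctly, and that the boundary term really is $\leq 0$ rather than $\geq 0$. A secondary subtlety is justifying that $\overset\circ{\Ric}_f(\nabla u, \nabla u) \geq 0$ holding only \emph{along the boundary} is enough; this is exactly why the boundary term needs to have the favorable sign, so the interior term can be isolated with its natural sign. Once the Weitzenb\"ock-type integral identity is in hand, the conclusion is immediate from the nonnegativity of $\int_\Sigma u^2|\overset\circ{\Ric}_f|^2\, d\Vol_f$ and the strict positivity of $u$ in the interior.
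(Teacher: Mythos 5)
Your target is the right one (show $\overset\circ{\Ric}_f\equiv 0$ using \eqref{traceless} and the divergence formula $\di_f\overset\circ{\Ric}_f=\tfrac{d\R}{2}-\tfrac{e^f}{n}d(e^{-f}(R+\Delta f))$, i.e.\ the paper's \eqref{divergenceric}; you are also right that the $X$ in the statement must be read with a gradient, $\tfrac{\nabla\R}{2}-\tfrac{e^f}{n}\nabla(e^{-f}(R+\Delta f))$). But the integral identity you propose does not do the job, and the failure is exactly at the point you flagged. If you take $\di_f$ of $\overset\circ{\nabla^2}u=u\overset\circ{\Ric}_f$, contract with $\nabla u$ and integrate by parts, the boundary term is $\int_{\partial\Sigma}\overset\circ{\nabla^2}u(\nabla u,\nu)\,d\sigma_f=\int_{\partial\Sigma}u\,\overset\circ{\Ric}_f(\nabla u,\nu)\,d\sigma_f$, which vanishes identically because $u=0$ on $\partial\Sigma$; it is not $-\int_{\partial\Sigma}|\nabla u|\,\overset\circ{\Ric}_f(\nabla u,\nabla u)\,d\sigma_f$, so the boundary hypothesis never enters. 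What remains is
$$\int_\Sigma u\langle X,\nabla u\rangle\,d\Vol_f+\int_\Sigma\overset\circ{\Ric}_f(\nabla u,\nabla u)\,d\Vol_f+\int_\Sigma u^2|\overset\circ{\Ric}_f|^2\,d\Vol_f=0,$$
whose middle term is an \emph{interior} integral with no sign (the hypothesis controls $\overset\circ{\Ric}_f(\nabla u,\nabla u)$ only along $\partial\Sigma$). Worse, this identity is vacuous: integrating $\overset\circ{\Ric}_f(\nabla u,\nabla u)=\langle\overset\circ{\Ric}_f(\nabla u,\cdot),\nabla u\rangle$ by parts once more (again using $u=0$ on $\partial\Sigma$, $(\di_f\overset\circ{\Ric}_f)(\nabla u)=\langle X,\nabla u\rangle$, and $\langle\overset\circ{\Ric}_f,\nabla^2u\rangle=u|\overset\circ{\Ric}_f|^2$ from \eqref{traceless}) gives $\int_\Sigma\overset\circ{\Ric}_f(\nabla u,\nabla u)\,d\Vol_f=-\int_\Sigma u\langle X,\nabla u\rangle\,d\Vol_f-\int_\Sigma u^2|\overset\circ{\Ric}_f|^2\,d\Vol_f$ for \emph{every} such $P$-singular manifold, so your identity collapses to $0=0$ and cannot force $\overset\circ{\Ric}_f=0$. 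The extra factor of $u$ you carry (by working with $\overset\circ{\nabla^2}u=u\overset\circ{\Ric}_f$) is what simultaneously kills the boundary term and makes the statement circular.

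The repair is to drop that factor of $u$: apply the weighted Pohozaev--Schoen identity \eqref{PS} to $T=\overset\circ{\Ric}_f$ itself with the vector field $\nabla u$, i.e.\ integrate $\di_f\bigl(\overset\circ{\Ric}_f(\nabla u,\cdot)\bigr)$. Then the boundary term survives as $\int_{\partial\Sigma}\overset\circ{\Ric}_f(\nabla u,\nu)\,d\sigma_f=-\int_{\partial\Sigma}\tfrac{1}{|\nabla u|}\overset\circ{\Ric}_f(\nabla u,\nabla u)\,d\sigma_f\le 0$ by the boundary hypothesis (using $\nu=-\nabla u/|\nabla u|$), while the interior contributions are $\int_\Sigma u|\overset\circ{\Ric}_f|^2\,d\Vol_f+\int_\Sigma\langle X,\nabla u\rangle\,d\Vol_f\ge 0$: note only one power of $u$ multiplies $|\overset\circ{\Ric}_f|^2$, and $\langle X,\nabla u\rangle$ appears without any weight $u$, so both signs come directly from the hypotheses and $u>0$. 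Equality then forces $u|\overset\circ{\Ric}_f|^2\equiv 0$, hence $\overset\circ{\Ric}_f\equiv 0$ since $u>0$ in the interior, i.e.\ $\Ric_f=\tfrac{R+\Delta f}{n}\,g$, the almost Ricci soliton conclusion. This is precisely the paper's argument; your computation of $\di_f\overset\circ{\Ric}_f$ is the reusable part of your proposal, but the integration-by-parts step must be performed on $\overset\circ{\Ric}_f$, not on $u\overset\circ{\Ric}_f$.
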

\begin{proof}
In fact, a straightforward calculation shows that
\begin{equation}\label{divergenceric}
{\rm{div}}_f\overset\circ{\Ric_f}=\frac{ d\R}{2}-\frac{e^{f}}{n} d(e^{-f}(R+\Delta f)).
\end{equation}

From $\eqref{PS},$ \eqref{divergenceric} and \eqref{traceless}   we get that 

$$-\int_{\partial\Sigma}|\nabla u|\overset\circ{\Ric}_f(\nu,\nu)\, d\sigma_f = \int_\Sigma u|\overset\circ{\Ric_f}|^{2}\, d\Vol_f + \int_\Sigma \langle X,\nabla u\rangle d\Vol_f,$$
where $\nu=-\frac{\nabla u}{|\nabla u|}.$
From the hypothesis on the Bakry-\'Emery Ricci tensor and the vector field $X$ we conclude the desired result.
\end{proof}
Our next finding describes how the hemisphere stands out as a unique P-singular manifold and almost Ricci soliton, given certain conditions. The statement of this result is as follows:

\begin{theorem}\label{thm2}
Let $(\Sigma, g, d\Vol_f, u)$ be a compact P-singular manifold with connected boundary.  If $\Sigma$ is an almost Ricci soliton  satisfying $\Ric_f=\omega g$ and $\sigma-\omega$ is a constant,  then $\Sigma$ is isometric to a hemisphere $\mathbb{S}^{n}_{+}.$
\end{theorem}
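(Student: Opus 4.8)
The plan is to show that, under the stated hypotheses, $\Sigma$ must be a simply-connected space form with totally geodesic connected boundary and positive constant curvature, which forces it to be a hemisphere. First I would combine the two structural equations available: from equation~\eqref{traceless} we have $u\,\overset\circ{\Ric}_f=\overset\circ{\nabla^2}u$, while the almost Ricci soliton condition $\Ric_f=\omega g$ makes $\overset\circ{\Ric}_f=0$, hence $\overset\circ{\nabla^2}u=0$, i.e. $\nabla^2 u=\frac{\Delta u}{n}g$. Next I would extract information about $f$: tracing $\Ric_f=\omega g$ gives $R+\Delta f=n\omega$, and the defining P-singular equation~\eqref{perelmansing} with $\Ric_f=\omega g$ reads $-(\Delta_f u)g+\nabla^2 u-\omega u\,g=0$; combining with the umbilicity of $\nabla^2 u$ yields $\Delta_f u$ proportional to $u$ with the constant related to $\omega$, and together with Corollary~\ref{cor2.1} (which gives $\Delta_f u=-\sigma u$ and $d\mathcal{R}_f=-2\sigma\,df$) and the hypothesis that $\sigma-\omega$ is constant, one should be able to pin down that $\sigma$ itself is constant, so $\mathcal{R}_f=c_0-2\sigma f$ falls into the framework of Theorem~\ref{thm1} and the earlier propositions.

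The heart of the argument is then to promote "$\nabla^2 u$ is pointwise umbilic" to a genuine space-form conclusion. From $\nabla^2 u=\frac{\Delta u}{n}g$ and Corollary~\ref{cor2.1}, and using $\Delta u=\Delta_f u+\langle\nabla f,\nabla u\rangle$, I would first argue that $f$ is forced to be constant (or that $\nabla f$ is a gradient proportional to $\nabla u$ in a way that is inconsistent with $\partial\Sigma=u^{-1}(0)$ unless $f$ is constant) — here the propositions ruling out $P$-singular manifolds with boundary and $\Ric_f=\lambda g$ for constant $\lambda$, i.e. Proposition~\ref{no existence bdry}, are the obstruction that must be dodged: the point is precisely that $\omega$ cannot be constant, only $\sigma-\omega$ is, so one obtains a nontrivial Obata-type equation rather than a contradiction. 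With $f$ constant, $\Ric_f=\Ric$ and $\Delta_f=\Delta$, so $u$ satisfies $\nabla^2 u=\frac{\Delta u}{n}g$ with $\Delta u=-\sigma u$, $\sigma$ a positive constant (positivity coming from $u>0$ in the interior, $u=0$ on the boundary, and the maximum principle applied at an interior maximum where $\nabla^2 u\le 0$). This is exactly Obata's equation $\nabla^2 u=-\frac{\sigma}{n}u\,g$ on a compact manifold with boundary, with $u>0$ inside and $u=0$ on $\partial\Sigma$, and the classical Reilly/Obata rigidity (as used by Reilly and in the static-metric literature, e.g. in the hemisphere rigidity statements referenced via \cite{corvino}, \cite{HMR}) then gives that $(\Sigma,g)$ is isometric to a round hemisphere of the appropriate radius.

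The main obstacle I anticipate is the bookkeeping that shows $f$ must be constant — disentangling the system $\{\nabla^2 u=\frac{\Delta u}{n}g,\ \Delta_f u=-\sigma u,\ d\mathcal{R}_f=-2\sigma df,\ \Ric_f=\omega g,\ \sigma-\omega=\text{const}\}$ and checking that it does not collapse to the forbidden case of Proposition~\ref{no existence bdry}. Concretely, one would differentiate $\Delta_f u=-\sigma u$, use the commutation of $\nabla^2 u$ with the metric, and feed in $\partial_\nu f$ information at the boundary (the umbilicity of $\nabla^2 u$ together with $\nabla^2 u|_{\partial\Sigma}$ related to the second fundamental form, which vanishes since $\partial\Sigma$ is totally geodesic) to conclude $\nabla f\equiv 0$. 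Once $f$ is constant, everything reduces to the known Obata-type rigidity for the hemisphere, so the remaining steps are routine citations rather than new computation.
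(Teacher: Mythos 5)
There is a genuine gap, and it sits exactly where you flag it yourself: the reduction to ``$f$ is constant.'' That intermediate claim is not only unproved in your sketch — it is false under the stated hypotheses. The model for this theorem is Example \ref{sphere} restricted to a hemisphere: on $\mathbb{S}^n_+$ with $u_w=\langle x,w\rangle$, $f=\langle x,v\rangle$, $\langle v,w\rangle=0$, one has $\ric_f=(n-1-f)g$, $\Delta_f u_w=-(n-f)u_w$, so $\omega=n-1-f$, $\sigma=n-f$, and $\sigma-\omega=1$ is constant, yet $f$ is non-constant. Moreover, Section \ref{sct3} assumes $f$ non-constant throughout, so any argument forcing $\nabla f\equiv 0$ would contradict the standing setup rather than complete the proof (and your worry about colliding with Proposition \ref{no existence bdry} is a symptom of this: that proposition concerns constant $\omega$, which is precisely not the situation here). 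So the ``bookkeeping'' you defer cannot be carried out.

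The point you miss is that no information about $f$ is needed at all. From \eqref{perelmansing} with $\Ric_f=\omega g$ you get $\nabla^2u=(\Delta_f u)g+u\,\omega g$, and Corollary \ref{cor2.1} gives $\Delta_f u=-\sigma u$, hence pointwise
\begin{equation*}
\nabla^2u=(\omega-\sigma)\,u\,g ,
\end{equation*}
where $\omega-\sigma$ is constant by hypothesis (and negative, since $u>0$ in the interior, $u=0$ on $\partial\Sigma$, so at an interior maximum $\nabla^2u\le 0$). This is already the Obata equation for the \emph{ordinary} Hessian, with Dirichlet condition $u=0$ on the connected boundary; Reilly's rigidity theorem (\cite{R1}) then yields the isometry with $\mathbb{S}^n_+$ directly. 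This is the paper's route (it phrases it by noting $\nabla u$ is conformal and applying \cite{R1} to $\Delta u=n(\omega-\sigma)u$, which is the same function up to a constant factor). Your endgame — Reilly/Obata with Dirichlet boundary data — is the right one, but you should apply it to $u$ as above and delete the attempt to prove $f$ constant, which is a dead end.
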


\begin{proof}
In fact, from the hypotheses we get that $\nabla u$ is a conformal vector field and therefore
\begin{equation}\label{conformal}
n\nabla^{2}u=\Delta u g.
\end{equation}

On another hand, since $\Sigma$ is $P$-singular and almost Ricci soliton we conclude that 
\begin{equation}\label{conformal2}
\nabla^{2}u=u(\omega-\sigma)g.
\end{equation}
In particular, $\omega-\sigma$ is a negative constant.

From equations \eqref{conformal} and \eqref{conformal2} we have that 
$$\nabla^{2}\Delta u=(\omega-\sigma)\Delta u g.$$
Taking into account that $\Delta u=0$ along of the boundary, from \cite{R1}, we conclude the desired result.
\end{proof}

\section{Rigidity and Non-existence results of open complete non-compact $P$-singular manifolds}\label{sct4}

In this section, we present some results about non-compact $P$-singular manifolds. A useful tool that we utilize is the following result, which first appeared with Yau; for a proof, see \cite{C}.

\begin{lemma}\label{div}Let $\Sigma$ be a Riemannian manifold, and let $X$ be a vector field on $\Sigma$ whose norm is integrable. If ${\rm {div}} X \geq 0$, then ${\rm {div}} X=0$ on $\Sigma$.
\end{lemma}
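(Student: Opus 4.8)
The plan is to reproduce the classical cutoff argument going back to Yau, for which we work, as throughout this section, with $\Sigma$ complete and without boundary. Fix a base point $o\in\Sigma$, let $r(x)=d(o,x)$ denote the Riemannian distance, and for each $R>0$ construct a Lipschitz cutoff function $\phi_R\colon\Sigma\to[0,1]$ with $\phi_R\equiv 1$ on the metric ball $B_R(o)$, $\phi_R\equiv 0$ outside $B_{2R}(o)$, and $|\nabla\phi_R|\leq 2/R$ almost everywhere; one may take $\phi_R=\eta(r)$ for a suitable fixed profile $\eta$, or mollify the piecewise-linear function of $r$. Completeness ensures that $B_{2R}(o)$ is relatively compact, so $\phi_R X$ is a compactly supported (Lipschitz) vector field and the divergence theorem gives $\int_\Sigma {\rm div}(\phi_R X)\, d\Vol = 0$.

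Expanding ${\rm div}(\phi_R X)=\phi_R\,{\rm div}X+\langle\nabla\phi_R,X\rangle$, this identity becomes
$$\int_\Sigma \phi_R\,{\rm div}X\, d\Vol = -\int_\Sigma\langle\nabla\phi_R,X\rangle\, d\Vol .$$
Since $\nabla\phi_R$ is supported in the annulus $B_{2R}(o)\setminus B_R(o)$ and $|\nabla\phi_R|\leq 2/R$, the right-hand side is controlled by
$$\left|\int_\Sigma\langle\nabla\phi_R,X\rangle\, d\Vol\right|\leq \frac{2}{R}\int_{B_{2R}(o)\setminus B_R(o)}|X|\, d\Vol \leq \frac{2}{R}\int_\Sigma|X|\, d\Vol,$$
which tends to $0$ as $R\to\infty$ because $|X|$ is assumed integrable on $\Sigma$.

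Finally, choose the family $\{\phi_R\}_{R>0}$ so that $\phi_R$ is pointwise nondecreasing in $R$ with $\phi_R\uparrow 1$. Because ${\rm div}X\geq 0$, the integrands $\phi_R\,{\rm div}X$ are nonnegative and increase pointwise to ${\rm div}X$, so the monotone convergence theorem yields $\int_\Sigma {\rm div}X\, d\Vol = \lim_{R\to\infty}\int_\Sigma \phi_R\,{\rm div}X\, d\Vol = 0$. As ${\rm div}X$ is a nonnegative continuous function with vanishing integral, it must vanish identically, which is the claim. The only genuinely delicate point is the existence of the cutoff functions with gradient bound $O(1/R)$, and this is exactly where completeness of $\Sigma$ is needed; the remaining steps are integration by parts, the $L^1$-bound on the annulus term, and a standard convergence theorem.
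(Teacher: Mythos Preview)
The paper does not actually supply its own proof of this lemma; it simply states the result and refers the reader to \cite{C}. Your cutoff argument is precisely the classical Yau argument that the cited reference records, and it is correct. You also rightly flag that completeness of $\Sigma$ is an implicit hypothesis here (consistent with the section heading), since without it the balls $B_{2R}(o)$ need not be relatively compact and the divergence theorem step would fail.
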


\begin{proposition}
Let $\Sigma$ be a complete Riemannian manifold endowed with a density function $f$. Assume that $1\in Ker (\delta\mathcal{R}_f)^*$.  If $|\nabla_{\nabla f}\nabla f| \in \mathcal{L}^1_f(\Sigma)$, then $\Sigma$ is isometric to $R\times \mathcal{S}$, where $\mathcal{S}$ is Ricci-flat and  $f(t,x)=t$.
\end{proposition}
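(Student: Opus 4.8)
The plan is to first turn the hypothesis into a rigid pointwise identity, then combine a weighted Bochner formula with Yau's lemma to kill $\nabla^2 f$, and finally feed the resulting parallel vector field into a de Rham-type splitting. First I would substitute $u\equiv 1$ into $(\delta\mathcal{R}_f(g))^*u=-(\Delta_fu)g+\nabla^2u-u\ric_f$: since $\Delta_f 1=0$ and $\nabla^2 1=0$, the hypothesis $1\in\ker(\delta\mathcal{R}_f)^*$ is exactly $\ric_f=0$, so $(\Sigma,g,f)$ is a complete gradient steady Ricci soliton. Tracing $\ric_f=0$ gives $R+\Delta f=0$, while Proposition \ref{for1}(i) gives $\tfrac12 d\mathcal{R}_f=\di_f(\ric_f)=0$, so $\mathcal{R}_f$ is constant; rewriting $\mathcal{R}_f=R+2\Delta f-|\nabla f|^2$ with $\Delta f=-R$ shows $\mathcal{R}_f=-R-|\nabla f|^2=\Delta f-|\nabla f|^2=\Delta_f f$, so in particular $\Delta_f f$ is the constant $\mathcal{R}_f$. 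These two facts are all the structure the argument uses.

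Next I would apply the weighted Bochner identity $\tfrac12\Delta_f|\nabla f|^2=|\nabla^2f|^2+\langle\nabla f,\nabla\Delta_ff\rangle+\ric_f(\nabla f,\nabla f)$ — which follows from Proposition \ref{for1} upon contracting $\di_f(\nabla^2 f)$ with $\nabla f$ — to the function $f$ itself. Both of the last two terms vanish, so the identity collapses to $\tfrac12\Delta_f|\nabla f|^2=|\nabla^2f|^2\ge 0$. Now set $X:=e^{-f}\,\nabla_{\nabla f}\nabla f=\tfrac12\,e^{-f}\nabla|\nabla f|^2$; using $\di(e^{-f}Y)=e^{-f}\di_fY$ one obtains $\di X=\tfrac12 e^{-f}\Delta_f|\nabla f|^2=e^{-f}|\nabla^2f|^2\ge 0$, while $\int_\Sigma|X|\,d\Vol=\int_\Sigma|\nabla_{\nabla f}\nabla f|\,d\Vol_f=\||\nabla_{\nabla f}\nabla f|\|_{\mathcal{L}^1_f(\Sigma)}<\infty$ by hypothesis. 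Lemma \ref{div} then forces $\di X\equiv 0$, hence $|\nabla^2 f|\equiv 0$, i.e. $\nabla^2 f=0$. I expect this to be the heart of the proof: the delicate point is the choice of $X$ — inserting the factor $e^{-f}$ so that the given weighted integrability is exactly what the unweighted Lemma \ref{div} consumes — together with checking that the soliton identities really do make $\di_f(\nabla_{\nabla f}\nabla f)=|\nabla^2 f|^2$ nonnegative.

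Finally, from $\nabla^2 f=0$ I would read off the splitting. Then $\ric=-\nabla^2 f=0$, so $\Sigma$ is Ricci-flat, and $\nabla f$ is parallel, so $|\nabla f|$ is constant. Assuming $f$ is non-constant (the conclusion being vacuous otherwise), $|\nabla f|\equiv a>0$, and $\xi=a^{-1}\nabla f$ is a globally defined parallel unit vector field whose flow $\varphi_t$ acts by isometries; its orbits cannot close up since $f\circ\varphi_t=f+at$ is strictly increasing along them. By completeness of $\Sigma$, the map $\Psi(t,x)=\varphi_t(x)$ is therefore an isometry from $\mathbb{R}\times\mathcal{S}$ onto $\Sigma$, where $\mathcal{S}=f^{-1}(0)$ is the leaf of the parallel, totally geodesic distribution $\xi^\perp$ — this is the de Rham splitting induced by a parallel field. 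Since $\ric=0$ splits as a direct sum over the product, $\mathcal{S}$ is Ricci-flat, and since $f(\varphi_t(x))=at$ for $x\in\mathcal{S}$, reparametrizing the line factor (equivalently, using $f$ as its coordinate) yields $f(t,x)=t$, as claimed.
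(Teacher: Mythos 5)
Your proof is correct and takes essentially the same route as the paper: setting $u\equiv 1$ to get $\ric_f=0$ and $\mathcal{R}_f$ (hence $\Delta_f f$) constant, applying the weighted Bochner formula to $f$, and feeding the field $X=e^{-f}\nabla_{\nabla f}\nabla f$ into Lemma \ref{div} so that the $\mathcal{L}^1_f$ hypothesis yields $\nabla^2 f=0$. The only difference is cosmetic: at the final step the paper simply invokes Tashiro's splitting theorem \cite{tashiro}, whereas you reproduce that splitting by hand from the parallel field $\nabla f$, with the same implicit treatment of the non-constancy of $f$ and of the normalization $f(t,x)=t$.
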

\begin{proof}
Since $1\in Ker(\delta\mathcal{R}_f)^*$ we have $\Ric_f=0$ and $\mathcal{R}_f$ is constant. Thus $\Delta f = |\nabla f|^2 + c$, where $c$ is a constant. Plugging $f$ into the Bochner formula below
$$\frac{1}{2}\Delta_f |\nabla v|^2 = |\nabla^2 v|^2 +\langle \nabla v,\nabla\Delta_f v\rangle + \Ric_f(\nabla v, \nabla v),$$ 
which holds for any $v\in C^3$, and so we obtain
$$\frac{1}{2}\Delta_f |\nabla f|^2 = |\nabla^2 f|^2,$$
because $\Delta_ff$ is constant and $\Ric_f$ vanishes. Since $e^{-f}|\nabla|\nabla f|^2|$ is integrable on $\Sigma$ we are able to apply Lemma \ref{div} to get that $\nabla^2f$ vanishes. To conclude the result we use \cite[Theorem 2]{tashiro}.\end{proof}

\begin{proposition}
Let $\Sigma$ be a complete Riemannian manifold endowed with a density function $f$ such that $|\nabla f|$ is bounded. Assume that $1\in Ker (\delta\mathcal{R}_f)^*$ and its $P$-scalar curvature is a nonnegative. Then, $f$ is constant and $\Sigma$ is Ricci-flat.
\end{proposition}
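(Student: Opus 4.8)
The plan is to combine the previous proposition's structural information with the hypothesis on the sign of the Perelman scalar curvature, using Lemma \ref{div} (the Yau-type integrability criterion) to force everything to be trivial. First, since $1\in\mathrm{Ker}(\delta\mathcal{R}_f)^*$, plugging $u\equiv 1$ into $(\delta\mathcal{R}_f(g))^*u = -(\Delta_fu)g+\nabla^2u-u\ric_f$ gives immediately $\ric_f=0$, hence $\mathcal{R}_f$ is constant by Proposition \ref{for1}(i) (or Corollary \ref{Rf}); call this constant $c\ge 0$ by hypothesis. Tracing $\ric_f=\ric+\nabla^2 f=0$ yields $R=-\Delta f$, and then the definition $\mathcal{R}_f=R+2\Delta f-|\nabla f|^2=c$ gives the first-order relation $\Delta f = |\nabla f|^2 + c$, equivalently $\Delta_f f = -|\nabla f|^2 + (\Delta f - |\nabla f|^2)\cdots$; more usefully, $\Delta_f f = \Delta f - |\nabla f|^2 = c$ is constant.

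Next I would run the Bochner formula for $f$ exactly as in the preceding proposition: since $\Ric_f=0$ and $\Delta_f f=c$ is constant,
$$\tfrac12\Delta_f|\nabla f|^2 = |\nabla^2 f|^2 + \langle\nabla f,\nabla\Delta_f f\rangle + \Ric_f(\nabla f,\nabla f) = |\nabla^2 f|^2 \ge 0.$$
The issue compared to the previous proposition is that now we are \emph{not} assuming $|\nabla_{\nabla f}\nabla f|\in\mathcal{L}^1_f$, only that $|\nabla f|$ is bounded; so I cannot directly apply Lemma \ref{div} to $\nabla|\nabla f|^2$. Instead, consider the vector field $X = e^{-f}\nabla|\nabla f|^2$. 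Its divergence is $\mathrm{div}\,X = e^{-f}\Delta_f|\nabla f|^2 = 2e^{-f}|\nabla^2 f|^2 \ge 0$, so by Lemma \ref{div} it suffices to show $|X|=e^{-f}|\nabla|\nabla f|^2| = 2e^{-f}|\nabla_{\nabla f}\nabla f|$ is integrable. Here is where the boundedness of $|\nabla f|$ and the nonnegativity of $\mathcal{R}_f$ should be leveraged — presumably through an intermediate estimate: from $\Delta f = |\nabla f|^2 + c$ with $|\nabla f|$ bounded we get $\Delta f$ bounded, hence $\Delta_f f = c$ and $|\nabla^2 f|$ controlled in $L^2_f$ by integrating the Bochner identity against a cutoff; combined with boundedness of $|\nabla f|$ this should give $|\nabla_{\nabla f}\nabla f| = |\nabla^2 f(\nabla f,\cdot)| \le |\nabla f|\,|\nabla^2 f| \in \mathcal{L}^1_f$ after a Cauchy--Schwarz/cutoff argument. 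I expect this integrability verification to be the main obstacle — one likely needs $c=0$ at this stage, obtained by integrating $\Delta_f f = c$ against $e^{-f}$ times a suitable cutoff using boundedness of $|\nabla f|$, which forces $c=0$ on a complete manifold of infinite weighted volume (or one argues directly that $c\ge 0$ together with the maximum-principle behavior of $f$ forces $c=0$).

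Once $|X|\in\mathcal{L}^1_f$, Lemma \ref{div} gives $\mathrm{div}\,X=0$, hence $|\nabla^2 f|^2\equiv 0$, so $\nabla^2 f=0$ everywhere. Then $\nabla f$ is a parallel vector field; if $\nabla f$ is not identically zero, $\Sigma$ splits isometrically as $\mathbb{R}\times\mathcal{S}$ with $f(t,x)=at+b$ by de Rham (or \cite[Theorem 2]{tashiro}), and on this splitting $|\nabla f|$ is the nonzero constant $|a|$, so $\Delta f=0$; but $\Delta f=|\nabla f|^2+c=a^2+c$, forcing $a^2+c=0$, impossible since $c\ge0$ and $a\ne 0$. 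Therefore $\nabla f\equiv 0$, i.e. $f$ is constant, and then $\ric_f=\ric=0$, so $\Sigma$ is Ricci-flat, which is the claimed conclusion.
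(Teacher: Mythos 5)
Your setup is correct and matches the paper: from $1\in{\rm Ker}(\delta\mathcal{R}_f)^*$ you get $\ric_f=0$, hence (by Corollary \ref{Rf}) $\mathcal{R}_f$ is a constant $c\ge 0$, $\Delta_f f=c$, and Bochner gives $\tfrac12\Delta_f|\nabla f|^2=|\nabla^2f|^2\ge 0$. But from there your argument has a genuine gap, and you flag it yourself: the whole strategy hinges on applying Lemma \ref{div} to $X=e^{-f}\nabla|\nabla f|^2$, which requires $|\nabla^2 f(\nabla f,\cdot)|\in\mathcal{L}^1_f(\Sigma)$, and this integrability is never established. The hypotheses here are exactly the ones for which that route breaks down: boundedness of $|\nabla f|$ plus $\mathcal{R}_f\ge 0$ does not control the weighted volume (which may be infinite), and the cutoff/Bochner argument you sketch only yields local $L^2_f$ bounds on $\nabla^2 f$ over balls $B_R$ with constants growing in $R$, which is not enough to conclude $|\nabla f|\,|\nabla^2 f|\in\mathcal{L}^1_f$ globally. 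Likewise the claim that one can first force $c=0$ ``by integrating $\Delta_f f=c$ against a cutoff'' is only a heuristic; nothing in the hypotheses rules out finite weighted volume or makes that integration close. In short, you are trying to reuse the $L^1$-Liouville mechanism of the \emph{previous} proposition, whose integrability hypothesis was deliberately dropped here.

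The paper takes a different and complete route precisely to avoid integrability: from the same Bochner identity it applies Cauchy--Schwarz, $|\nabla^2 f|^2\ge\frac{(\Delta f)^2}{n}=\frac{(|\nabla f|^2+\mathcal{R}_f)^2}{n}$, so that $\tfrac12\Delta_f|\nabla f|^2\ge\frac{(|\nabla f|^2+\mathcal{R}_f)^2}{n}$. Since $\ric_f=0$, the weak Omori--Yau maximum principle for $\Delta_f$ (see \cite{AMR:16}) applies to the bounded function $|\nabla f|^2$: along a maximizing sequence the left-hand side has nonpositive limsup while the right-hand side tends to $\frac{(\sup|\nabla f|^2+c)^2}{n}$ with $c\ge 0$, forcing $\sup_\Sigma|\nabla f|^2=0$. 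Hence $f$ is constant and $\ric=\ric_f=0$. If you want to salvage your write-up, replace the Lemma \ref{div} step by this maximum-principle argument; your concluding splitting discussion then becomes unnecessary.
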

\begin{proof} Under our hypotheses we already know that
$$\frac{1}{2}\Delta_f |\nabla f|^2 = |\nabla^2 f|^2.$$
By the Cauchy-Schwarz inequality we have:
$$\frac{1}{2}\Delta_f |\nabla f|^2 \geq \dfrac{(\Delta f)^2}{n} = \dfrac{(|\nabla f|^2 + \R)^2}{n}.$$
Since $\ric_f =0$, we are able to apply the weak Omori-Yau generalized maximum principle, see \cite{AMR:16}, and get that $\sup_\Sigma|\nabla f|^2 =\inf\R = 0$. Therefore $f$ is constant.

\end{proof}

The next result provides  a sufficient condition to  guarantee  that a complete non-compact $P$-singular manifold is a Ricci soliton.

\begin{proposition}
Let $(\Sigma, g, d\Vol_{f}, u)$ be a P-singular complete non-compact manifold  with $\Delta_fu=-\lambda u$, where $\lambda$ is a constant,  and  $\left \{p\in\Sigma; u(p)=0\right\}$ is a set of isolated points.  Suppose that $n\omega=R+\Delta f$ is a constant and $(\omega-\lambda)\langle\nabla u, \nabla f\rangle\geq 0.$  If $|\overset\circ {\Ric_f}(\nabla u)|\in\mathcal{L}^{1}_f(\Sigma)$, then $\Sigma$ is a gradient Ricci soliton.
\end{proposition}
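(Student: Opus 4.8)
The plan is to prove the stronger statement that $\overset\circ{\Ric_f}\equiv 0$ on $\Sigma$; since $\tr\Ric_f=R+\Delta f=n\omega$ is constant by hypothesis, this immediately gives $\Ric_f=\omega g$ with $\omega$ a constant, i.e. $(\Sigma,g,f)$ is a gradient Ricci soliton. First I would extract two algebraic identities from the hypotheses. From $\Delta_fu=-\lambda u$ and Corollary \ref{Rf} one gets $u\,\big(\tfrac12 d\R+\lambda\,df\big)=0$; since the zero set of $u$ consists of isolated points it is nowhere dense, so by continuity $d\R=-2\lambda\,df$ on all of $\Sigma$. Because $R+\Delta f=n\omega$ is constant one has $\tfrac{e^{f}}{n}\,d\big(e^{-f}(R+\Delta f)\big)=-\omega\,df$, and substituting this together with the previous identity into \eqref{divergenceric} yields
\[\di_f\overset\circ{\Ric_f}=\tfrac12\,d\R+\omega\,df=(\omega-\lambda)\,df.\]

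Next I would test this against $\nabla u$. Consider the vector field $Z=\overset\circ{\Ric_f}(\nabla u,\cdot)^{\sharp}$. Using the $f$-analogue of the Leibniz rule $\di\big(T(\nabla u,\cdot)^{\sharp}\big)=(\di T)(\nabla u)+\langle T,\nabla^{2}u\rangle$ for a symmetric $(0,2)$-tensor $T$, the fact that $\overset\circ{\Ric_f}$ is traceless (so only the traceless part of $\nabla^{2}u$ contributes), and the identity \eqref{traceless}, namely $\overset\circ{\nabla^{2}}u=u\,\overset\circ{\Ric_f}$, I obtain
\[\di_f Z=(\omega-\lambda)\langle\nabla f,\nabla u\rangle+\big\langle\overset\circ{\Ric_f},\overset\circ{\nabla^{2}}u\big\rangle=u\,\big|\overset\circ{\Ric_f}\big|^{2}+(\omega-\lambda)\langle\nabla f,\nabla u\rangle.\]
By the hypothesis $(\omega-\lambda)\langle\nabla u,\nabla f\rangle\geq 0$ together with $u\geq 0$, the right-hand side is nonnegative.

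Finally I would invoke the weighted version of Lemma \ref{div}. Put $Y=e^{-f}Z$; then $|Y|_g=e^{-f}\,|\overset\circ{\Ric_f}(\nabla u)|$ is integrable with respect to $d\Vol$ precisely because $|\overset\circ{\Ric_f}(\nabla u)|\in\mathcal{L}^{1}_f(\Sigma)$, while $\di Y=e^{-f}\di_f Z\geq 0$. Lemma \ref{div} then forces $\di Y\equiv 0$, hence $\di_f Z\equiv 0$, so the two nonnegative summands $u\,|\overset\circ{\Ric_f}|^{2}$ and $(\omega-\lambda)\langle\nabla f,\nabla u\rangle$ vanish identically. In particular $u\,|\overset\circ{\Ric_f}|^{2}\equiv 0$; since $u\neq 0$ on a dense open set, $\overset\circ{\Ric_f}\equiv 0$ there and, by continuity, on all of $\Sigma$, which gives $\Ric_f=\omega g$ with $\omega$ constant, as desired.

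I expect the main obstacle to be the computation in the second paragraph: making the $f$-divergence of $Z$ collapse exactly to $u\,|\overset\circ{\Ric_f}|^{2}$ plus the sign-controlled term, which requires correctly combining the weighted contracted Bianchi identity behind \eqref{divergenceric} (to evaluate $\di_f\overset\circ{\Ric_f}$ once $R+\Delta f$ is constant) with the traceless-Hessian identity \eqref{traceless} coming from the $P$-singular equation \eqref{perelmansing}. A secondary subtlety is the legitimate transfer of Lemma \ref{div} to the drift setting through the substitution $Y=e^{-f}Z$, together with the two uses of the isolated-zeros hypothesis: to propagate $d\R=-2\lambda\,df$ across the zero set and to upgrade $u\,|\overset\circ{\Ric_f}|^{2}\equiv 0$ to $\overset\circ{\Ric_f}\equiv 0$.
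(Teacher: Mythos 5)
Your proposal is correct and follows essentially the same route as the paper: derive $\di_f\overset\circ{\Ric_f}=(\omega-\lambda)\,df$ from \eqref{divergenceric}, Corollary \ref{Rf} and the isolated-zeros hypothesis, compute $\di_f\bigl(\overset\circ{\Ric_f}(\nabla u)\bigr)=u|\overset\circ{\Ric_f}|^{2}+(\omega-\lambda)\langle\nabla u,\nabla f\rangle$ via \eqref{traceless}, and conclude with Lemma \ref{div}. You are in fact slightly more explicit than the paper about the weighted transfer of Lemma \ref{div} (through $Y=e^{-f}Z$) and about the sign normalization of $u$, but the argument is the same.
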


\begin{proof}
Since $\Delta_fu=-\lambda u$ and $n\omega=R+\Delta f$ we have, from  \eqref{divergenceric}, that
\begin{equation}\label{divtraceless}
{\rm {div}}_f\overset\circ {\Ric_f}=\frac{d \mathcal{R}_f}{2}+\omega df.
\end{equation}

Since $\left \{p\in\Sigma; u(p)=0\right\}$ is a set of isolated points, from Corollary \ref{Rf}  and  $\Delta_fu=-\lambda u$ we conclude that
$$\frac{\nabla \mathcal{R}_f}{2}+\lambda \nabla f=0.$$

Plugging the previous equality into \eqref{divtraceless} we conclude that 

$$({\rm {div}}_f\overset\circ {\Ric_f})(\nabla u)=(\omega-\lambda)\langle\nabla f,\nabla u\rangle.$$

Thus,
\begin{equation}\label{divcomplete}
{\rm {div}}_f(\overset\circ {\Ric_f}(\nabla u))=u|\overset\circ {\Ric_f}|^{2}+ (\omega-\lambda)\langle\nabla u, \nabla f\rangle.
\end{equation}

Since  $|\overset\circ {\Ric_f}(\nabla u)|\in\mathcal{L}^{1}_f(\Sigma)$ we are able to use Lemma \ref{div}. Finally, from \eqref{divcomplete}, we conclude that $\stackrel\circ {\Ric_f}=0$ and, therefore, $\Sigma$ is a gradient Ricci soliton.

\end{proof}

\begin{theorem}\label{thm3}
Let $(\Sigma, g, d\Vol_{f}, u)$ be a non-trivial P-singular complete non-compact manifold with constant scalar curvature. Suppose that $\Ric_f=\omega g$ and $\Delta_fu=-\lambda u$, where $\lambda\neq 0$ is a constant, $\left \{p\in\Sigma; u(p)=0\right\}$ is a set of isolated points and $\omega-\lambda=\frac{R}{n}$.  Then, $\Sigma$ is a Gaussian Ricci soliton.
\end{theorem}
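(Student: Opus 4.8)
The plan is to first observe that the hypotheses force $\omega$ to be \emph{constant}: since the scalar curvature $R$ is assumed constant and $\lambda$ is a constant, the relation $\omega-\lambda=\tfrac{R}{n}$ gives $\omega\equiv\lambda+\tfrac{R}{n}$. Thus $\Ric_f=\omega g$ with $\omega$ constant, i.e.\ $\Ric+\nabla^2 f=\omega g$ is a genuine gradient Ricci soliton. From here the strategy is: compute $d\mathcal{R}_f$ in two different ways and compare, deduce $R\,df\equiv 0$, rule out $df\equiv 0$ so that $R\equiv 0$, and finally invoke the rigidity of gradient Ricci solitons with vanishing scalar curvature together with Tashiro's theorem.

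For the two computations of $\tfrac12\,d\mathcal{R}_f$: on one hand, by Proposition~\ref{for1}(i) together with $\di_f(\omega g)=d\omega-\omega\,df=-\omega\,df$ (valid since $\omega$ is constant), one gets
$$\tfrac12\,d\mathcal{R}_f=\di_f(\Ric_f)=\di_f(\omega g)=-\omega\,df.$$
On the other hand, since $u\in\mathrm{Ker}(\delta\mathcal{R}_f)^*$ and $\Delta_f u=-\lambda u$, Corollary~\ref{Rf} gives $\tfrac12 u\,d\mathcal{R}_f=\Delta_f u\,df=-\lambda u\,df$; because $u^{-1}(0)$ consists of isolated points, $u\neq 0$ on a dense open subset of $\Sigma$, so there $\tfrac12\,d\mathcal{R}_f=-\lambda\,df$, and hence on all of $\Sigma$ by continuity. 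Comparing the two identities yields $(\omega-\lambda)\,df\equiv 0$, that is, $\tfrac{R}{n}\,df\equiv 0$ on $\Sigma$.

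Next I would show that $f$ is nonconstant: if $f$ were constant, then $\Ric_f=\Ric=\omega g$, so $R=\tr\Ric=n\omega$, and combined with $\omega-\lambda=\tfrac{R}{n}=\omega$ this forces $\lambda=0$, contradicting $\lambda\neq 0$. Hence $\nabla f\neq 0$ on a nonempty open set, on which the previous relation forces $R=0$; since $R$ is constant, $R\equiv 0$ on $\Sigma$. Now the soliton structure $\Ric+\nabla^2 f=\omega g$ together with $R\equiv 0$ gives, through the standard soliton identity $\Delta_f R=2\omega R-2|\Ric|^2$ (a consequence of the twice-contracted Bianchi identity), that $|\Ric|^2\equiv 0$; thus $\Ric\equiv 0$ and $\nabla^2 f=\omega g$ with $\omega=\lambda\neq 0$. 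By Tashiro's theorem \cite{tashiro}, $(\Sigma,g)$ is isometric to Euclidean $\mathbb{R}^n$ and, in suitable coordinates, $f(x)=\tfrac{\lambda}{2}|x|^2+\langle a,x\rangle+b$ for some $a\in\mathbb{R}^n$, $b\in\mathbb{R}$; completing the square shows that, up to an irrelevant constant factor, $d\Vol_f=e^{-f}\,d\Vol$ is a Gaussian measure, so $\Sigma$ is a Gaussian Ricci soliton.

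The computations involved are all short, so there is no serious technical obstacle; the work is in the conceptual setup. The decisive step is the realization in the first paragraph that $\omega$ is automatically constant and, more importantly, that the seemingly technical hypothesis $\omega-\lambda=\tfrac{R}{n}$ is exactly what makes the two expressions for $d\mathcal{R}_f$ collapse to $R\,df\equiv 0$; after that, the only delicate point is disposing of the degenerate case $df\equiv 0$, which is precisely where $\lambda\neq 0$ is used in an essential way. The final implication $R\equiv 0\Rightarrow\Ric\equiv 0$ is the familiar strong rigidity of gradient Ricci solitons and is routine once the Bianchi-type identity is recorded.
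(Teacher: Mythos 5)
Your proof is correct, but it takes a genuinely different route from the paper's at the decisive step. Both arguments exploit Corollary \ref{Rf} together with the isolated-zeros hypothesis, the constancy of $\omega=\lambda+\tfrac{R}{n}$, and $\lambda\neq 0$; the difference is how the soliton structure is brought in. The paper integrates Corollary \ref{Rf} to get $\mathcal{R}_f=-2\lambda f+c_0$, derives the second-order identity $\Delta\bigl(\mathcal{R}_f+2\omega f-\tfrac{2R}{n}f\bigr)=-2|\overset\circ{\Ric}|^2$, concludes that $\Sigma$ is Einstein, and then invokes Proposition 3.1 of \cite{PW} (ruling out $\nabla^2 f=0$ via $\lambda\neq 0$) to land on the Gaussian soliton. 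You instead stay at first order: comparing $\tfrac12 d\mathcal{R}_f=\di_f(\Ric_f)=\di_f(\omega g)=-\omega\,df$ with $\tfrac12 d\mathcal{R}_f=-\lambda\,df$ (Corollary \ref{Rf} on the dense set $u\neq 0$, then by continuity) yields $(\omega-\lambda)\,df=\tfrac{R}{n}\,df\equiv 0$; the case $df\equiv 0$ is excluded exactly as in the paper's final step (it forces $\lambda=0$), so $R\equiv 0$, the standard soliton identity $\Delta_f R=2\omega R-2|\Ric|^2$ then gives $\Ric\equiv 0$ and $\nabla^2 f=\lambda g$, and Tashiro's theorem \cite{tashiro} produces the Euclidean structure with quadratic $f$. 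What each buys: your argument is more self-contained, replacing the external Petersen--Wylie rigidity result by a classical soliton identity plus Tashiro (which the paper already cites elsewhere), and it identifies $R\equiv 0$ and $\Ric\equiv 0$ explicitly rather than only ``Einstein''; the paper's route reuses the weighted divergence machinery of Section \ref{sct3} and reaches the conclusion without needing the pointwise identity $\Delta_f R=2\omega R-2|\Ric|^2$, delegating the final classification to \cite{PW}. Either way the hypotheses $\lambda\neq 0$ and $\omega-\lambda=\tfrac{R}{n}$ are used in the same essential places, so I see no gap in your argument.
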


\begin{proof}
Indeed, since $Ric_f=\omega g$, we have that 

\begin{eqnarray*}
\R &=&R+\Delta f+\Delta f-|\nabla f|^{2}\\
&=&n\omega+(n\omega-R)-|\nabla f|^{2}\\
&=&2n\omega-R-|\nabla f|^{2}.
\end{eqnarray*}

Now,  since $\omega$ and the scalar curvature is constant we conclude that 
\begin{eqnarray*}
\nabla \R&=&-2\nabla^{2}f(\nabla f)\\
&=&-2(\omega\nabla f-\Ric(\nabla f)).
\end{eqnarray*}

Taking the divergence we get that:

\begin{equation}\label{laplacianRf}
\Delta (\R+2\omega f-\frac{2R}{n}f)=-2|\stackrel\circ {\Ric}|^2
\end{equation}

Since $\left \{p\in\Sigma; u(p)=0\right\}$ is a set of isolated points, we can invoke the Proposition \ref{Rf} to conclude that 
$$\R=-2\lambda f+c_0,$$
where $c_0$ is a constant.

Taking into account that $\lambda=\omega-\frac{R}{n}$ and from the above equality we guarantee that 
$$\Delta (\R+2\omega f-\frac{2R}{n}f)=0.$$

From the previous equality and \eqref{laplacianRf}, we conclude that $\Sigma$ is an Einstein manifold. Thus, applying Proposition 3.1 of \cite{PW}, we infer that either $\nabla^{2} f=0$ or $\Sigma$ is a Gaussian soliton. Now, assuming by contradiction that $\nabla^{2} f=0$, then $\R$ is constant, and consequently, $f$ is a constant. This leads to a contradiction, and we can thus conclude that $\Sigma$ is a Gaussian soliton.

\end{proof}


\section*{Funding}
The authors were partially supported by the Brazilian National Council for Scientific and Technological Development [Grants: 308440/2021-8, 405468/2021-0 to M.B., 316080/2021-7, 200261/2022-3 to A. F., 306524/2022-8 to M.S.], by Alagoas Research Foundation [Grant: E:60030.0000001758/2022 to M.B.], Para\'iba State Research Foundation (FAPESQ) [Grants: 2021/3175 to A.F and  3025/2021 to A.F and M.S.]  and authors were also partially supported by Coordination for the Improvement of Higher Education Personnel [Finance code - 001], Brazil.

\section*{Data availability statement}
This manuscript has no associated data.

\section*{Conflict of interest statement}
On behalf of all authors, the corresponding author states that there is no conflict of interest.

\bibliographystyle{plain}
\bibliography{reference.bib}

\end{document}